\documentclass[a4paper,12pt]{article}
\usepackage{amsmath,amsthm,amssymb}
\usepackage{xcolor}

\usepackage{graphicx}
\usepackage{epstopdf}
\usepackage{indentfirst}
\usepackage{geometry}
\usepackage{caption}
\usepackage{enumerate}
\usepackage[numbers,sort&compress]{natbib}
\usepackage{float}
\usepackage[hidelinks]{hyperref}
\usepackage[nameinlink]{cleveref}

\theoremstyle{definition}
\newtheorem{The}{Theorem}
\newtheorem{Lem}[The]{Lemma}
\newtheorem{Cor}[The]{Corollary}
\newtheorem{Pro}[The]{Proposition}
\newtheorem{Cla}{Claim}
\newtheorem{Subcla}{Subclaim}[Cla]
\newtheorem{Prob}[The]{Problem}
\newtheorem{Cas}{Case}

\newtheorem{Not}[The]{Note}

\crefname{The}{Theorem}{Theorems}
\crefname{Lem}{Lemma}{Lemmas}
\crefname{Cor}{Corollary}{Corollaries}
\crefname{Pro}{Proposition}{Propositions}
\crefname{Cla}{Claim}{Claims}
\crefname{Subcla}{Subclaim}{Subclaims}
\crefname{Prob}{Problem}{Problems}
\crefname{Cas}{Case}{Cases}
\crefname{Subcas}{Subcase}{Subcases}
\crefname{Not}{Note}{Notes}
\crefname{figure}{Fig.}{Figs.}

\begin{document}
\title{Near-bipartite bricks in which every $b$-invariant edge is a forcing edge
}
\author{
   Yaxian Zhang$^{1}$ and Fuliang Lu$^{2,3}$\thanks{Corresponding author.}
}
\date{
		{\small
         $^{1}$School of Mathematical Sciences, Sichuan Normal University, 
         Chengdu, Sichuan 610068, P.R. China\\
			$^{2}$School of Mathematics and Statistics, Minnan Normal University,
			Zhangzhou, Fujian 363000, P.R. China\\
			$^{3}$Fujian Key Laboratory of Granular Computing and Applications,
			Minnan Normal University, Zhangzhou, Fujian 363000, P.R. China\\[0.5ex]
			E-mails: yaxianzhang@sicnu.edu.cn, flianglu@163.com
		}
	}

\maketitle
\begin{abstract}
   A connected graph is matching covered if it has at least one edge and every edge lies in some perfect matching.
   Lov\'asz proved that every matching covered graph $G$ can be uniquely decomposed into a list of bricks
   and braces up to multiple edges. Denote by $b(G)$ the number of bricks in such a decomposition.
   An edge $e$ of $G$ is removable if $G-e$ is also matching covered; it is $b$-invariant if $e$ is removable and $b(G-e)=b(G)$.
   Furthermore, an edge $e$ of $G$ is a forcing edge if it lies in precisely one perfect matching of $G$.

   Lucchesi and Murty proposed the problem of characterizing bricks, distinct from
$K_4$, $\overline{C_6}$, and the Petersen graph, in which every $b$-invariant
edge is a forcing edge. In this paper, we solve this problem for near-bipartite bricks
by providing a complete characterization.

   \vskip 0.1 in
   \noindent {\bf Keywords:} \ Matching covered graph; Near-bipartite brick; $b$-invariant edge; Forcing edge
   \medskip
\end{abstract}
\section{Introduction}
All graphs in this paper are finite and contain no loops (multiple edges are allowed).
The \emph{underlying graph} of $G$ is the simple graph obtained from $G$
by deleting all but one of the edges joining each pair of adjacent vertices. 
A subset of the edge set is a \emph{perfect matching} of a graph if
every vertex is incident with exactly one edge of the subset.
A connected graph is \emph{matching covered} if it contains at least one edge,
and each edge belongs to some perfect matching.
For the terminology that is specific to matching covered graphs,
we follow Lov\'asz and Plummer \cite{lovasz2009matching}.

Let $G$ be a connected graph with vertex set $V(G)$ and edge set $E(G)$.
For any nonempty subset $X\subset V(G)$, we call $\partial(X)$ an \emph{edge cut} of $G$, which is the set of edges with one end vertex in $X$ and the other in $\overline{X}=V(G)-X$.
In particular, if $|X|=1$ or $|\overline{X}|=1$, then the edge cut $\partial(X)$ is called \emph{trivial}; otherwise, \emph{nontrivial}.
An edge cut of $G$ is a \emph{tight cut}, if it intersects each perfect matching of $G$ exactly in one edge.
A matching covered graph without nontrivial tight cuts is a \emph{brick}, if it is nonbipartite; a \emph{brace}, otherwise.

A graph $G$ is \emph{bicritical} if $G-x-y$ has a perfect matching for any two distinct vertices $x,y\in V(G)$. 
Edmonds et al. \cite{Edmonds1982} showed that
a graph $G$ is a brick if and only if it is 3-connected and bicritical 
(also see Szigeti \cite{Szigeti2002} and de Carvalho et al. \cite{Carvalho2018}).
Lov\'asz \cite{Lovasz1987} proved that every matching covered graph can be decomposed into a unique list of bricks and braces up to multiple edges by a procedure
called the tight cut decomposition. In particular, any two applications of the tight cut
decomposition of a matching covered graph $G$ yield the same number of bricks, which is
called the brick number of $G$ and denoted by $b(G)$.

An edge $e$ of a matching
covered graph $G$ is removable if $G-e$ is also matching covered; it is $b$-invariant if $e$ is removable and $b(G-e)=b(G)$.
Confirming a conjecture of Lov\'asz, de Carvalho et al. \cite{Carvalho2002II} showed that every brick distinct from $K_4$, $\overline{C_6}$ and the Petersen graph has a $b$-invariant edge.

Kothari et al. \cite{KCLL20} showed that every essentially 4-edge-connected cubic non-near-bipartite brick $G$, distinct from the Petersen graph, has at least $|V(G)|$ $b$-invariant edges.
Moreover, they conjectured that every essentially 4-edge-connected cubic near-bipartite brick $G$, distinct from $K_4$, has at least $|V(G)|/2$ $b$-invariant edges;
Lu et al. \cite{LXY19} confirmed this conjecture.
A brick $G$ is {\em solid} if $G-(V(C_1)\cup V(C_2))$ has no perfect matching for any two vertex-disjoint odd cycles $C_1$ and $C_2$.
De Carvalho et al. \cite{CLM12} proved that every solid brick, distinct from $K_4$, has at least $\frac{|V(G)|}{2}$ $b$-invariant edges.

Lucchesi and Murty \cite{Lucchesi2022} used $b$-invariant edges to characterize extremal matching covered graphs and introduced the notion of a solitary edge.
An edge of a graph is \emph{solitary} if it lies in precisely one perfect matching.
In fact, solitary edges had already been studied in benzenoid hydrocarbons in theoretical chemistry under the name ``\emph{forcing edge}''.
This concept was first introduced by Klein and Randi\'c \cite{Klein1987} under the
name \emph{innate degree of freedom} and was later called \emph{forcing} by Harary et al.
\cite{Harary1991}. For hexagonal systems and polyomino graphs, Zhang et al.
\cite{zhang1988,zhang1995} and Sun et al. \cite{sun2024}, respectively,
characterized the graphs whose resonance graphs have a vertex of degree one and,
as a consequence, determined all such graphs with a forcing edge. For
3-connected cubic graphs, Wu et al. \cite{Ye2016} showed that every graph with a
forcing edge can be generated from $K_4$ by repeatedly replacing a vertex with a
triangle. Recently, Goedgebeur et al. \cite{Goedgebeur2026} studied forcing edges in bridgeless cubic graphs.

A matching covered graph $G$ is \emph{extremal} if the number of its perfect matchings
equals the dimension of the lattice spanned by their incidence vectors.
For a brick $G$, this lattice has dimension
$|E(G)|-|V(G)|+1$. Consequently, a brick $G$ is extremal if and only if
$|\mathcal{M}(G)|=|E(G)|-|V(G)|+1$,
where $\mathcal{M}(G)$ denotes the set of all perfect matchings of $G$. Using solitary $b$-invariant edges, de Carvalho et al. \cite{Carvalho2004} proved that a matching covered graph $G$ is extremal if and
only if every $b$-invariant edge $e$ of $G$ is solitary and $G-e$ is extremal. Motivated
by this connection between $b$-invariant and solitary edges, Lucchesi and Murty posed
the following more general problem.

\begin{Prob}
   \rm \cite{Lucchesi2022}
   \label{prob1}
Characterize bricks, distinct from $K_4$, $\overline{C_6}$ and the Petersen graph, in which every
$b$-invariant edge is solitary.
\end{Prob}

\cref{prob1} has been settled for several classes of bricks. In particular, in extremal bricks every
$b$-invariant edge is a forcing edge \cite{Carvalho2004}.
For simple solid bricks, Zhang et al. \cite{ZhangW25} proved that every such brick satisfying this property is an odd wheel.
For claw-free bricks, Zhang and Wang \cite{ZhangWClaw25} proved that exactly four graphs have the property described in \cref{prob1}. 
For cubic bricks, Zhang et al. \cite{Zhang24} obtained the following characterization.
\begin{The}
   \label{th1}
   Let $G$ be a cubic brick other than $K_4$, $\overline{C_6}$ and the Petersen graph. All $b$-invariant edges in $G$
   are forcing edges if and only if $G$ is one of the graphs in $\{G_i:1\leq i\leq 7\}$ (see \cref{f4}).
\end{The}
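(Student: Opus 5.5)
The plan has two directions: a sufficiency check on the seven graphs of \cref{f4}, and a structural reduction showing no other cubic brick has the property.

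\textbf{Sufficiency.} For each $G_i$, $1\le i\le 7$, I would list the removable edges and determine which are $b$-invariant. In a cubic brick $G$, an edge $e=uv$ is removable if and only if $G-e$ is matching covered. After bicontracting the degree-two vertices $u$ and $v$, we get a graph whose tight cut decomposition can be read off directly. Then, for each $b$-invariant edge $e$, I would verify that $e$ lies in exactly one perfect matching. Equivalently, $G-V(e)$ has a unique perfect matching, which is a finite check on the figures.

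\textbf{Necessity.} The natural tool is the generation of cubic bricks: every cubic brick other than $K_4$, $\overline{C_6}$ and the Petersen graph can be reduced by removing a $b$-invariant edge and bicontracting, or by splitting along a nontrivial $3$-cut such as a triangle. I would first use the existence result of de Carvalho et al.\ \cite{Carvalho2002II} to fix a $b$-invariant edge $e=uv$, which is forcing by hypothesis. Let $M$ be the unique perfect matching containing $e$. Then $G-u-v$ has a unique perfect matching. A graph with a unique perfect matching has a bridge lying in that matching (Kotzig), and applying this repeatedly gives a rigid ear-like structure for $G-u-v$. Because $G$ is cubic and $3$-connected, this forces many triangles, or small $3$-cuts, near $u$ and $v$.

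Next I would split on whether $G$ is essentially $4$-edge-connected.
\begin{enumerate}
\item If $G$ is essentially $4$-edge-connected, the result of Kothari et al.\ \cite{KCLL20} (non-near-bipartite case) and that of Lu et al.\ \cite{LXY19} (near-bipartite case) give at least $|V(G)|/2$ $b$-invariant edges. I would show that so many forcing edges, with pairwise distinct associated unique matchings and no bridges in $G-V(e)$, are incompatible with essential $4$-edge-connectivity unless $|V(G)|$ is tiny. The small cases can then be checked by hand.
\item Otherwise $G$ has a nontrivial $3$-cut. In a brick such a cut is not tight, and I would take a minimal side, which is a triangle or a larger odd set. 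Contracting each shore gives smaller cubic graphs $G_1$ and $G_2$, which are bricks or close to it. I would show that $b$-invariant edges of $G_j$ away from the cut lift to $b$-invariant edges of $G$. Forcing is inherited, because a perfect matching of $G$ restricts to perfect matchings of the two contractions that agree on the cut edge. So each $G_j$ either has the property or lies in $\{K_4,\overline{C_6},\text{Petersen}\}$. Induction then limits $G$ to gluings of members of the family with $K_4$ and friends, and the gluings that create a non-forcing $b$-invariant edge can be eliminated directly.
\end{enumerate}

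\textbf{Main obstacle.} I expect the hardest step to be the lifting in case 2: showing that an edge $b$-invariant in a cut-contraction stays $b$-invariant in $G$, and handling edges incident to the cut. The brick number can change across a $3$-cut that is not tight, so removability and brick count both need care there. I would first try to use triangle replacement, i.e.\ Wu et al.\ \cite{Ye2016}'s description of cubic graphs with forcing edges as obtained from $K_4$ by vertex-to-triangle expansions. This would restrict $G$ to such expansions, and I would then check directly which expansions keep every $b$-invariant edge forcing. That check should yield exactly $G_1,\dots,G_7$.
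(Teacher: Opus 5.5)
The paper does not prove this theorem. It quotes it from Zhang et al., so your proposal has to stand on its own. Your sufficiency part is a legitimate finite check. Your Case 1 is easier than you make it. Let $G\neq K_4$ be an essentially $4$-edge-connected cubic brick and $uv\in E(G)$, and suppose $G-u-v$ has a bridge with sides $Y,Z$. By $3$-edge-connectivity, each of $Y$ and $Z$ receives exactly two of the four edges leaving $\{u,v\}$. So $\partial(Y)$ and $\partial(Z)$ are $3$-cuts, hence trivial, which forces $|V(G)|=4$. Thus $G-u-v$ is bridgeless, and Kotzig's theorem shows that $uv$ is not forcing. The $b$-invariant edge supplied by de Carvalho et al.\ then already violates the property, and the counting theorems are not needed.

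The genuine gap is the reduction to finitely many graphs, which is the whole content of necessity. Write $G'=G/\overline{X}$ and $G''=G/X$ for the contractions of a nontrivial $3$-cut $C=\partial(X)$ (your $G_1,G_2$). Your induction needs every $b$-invariant edge $f$ of $G'$ not incident with the contracted vertex to be $b$-invariant in $G$, and this is false.

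The perfect matchings of $G-f$ are those of $G$ avoiding $f$. So if every perfect matching of $G$ meeting $C$ in three edges contains $f$, then $C$ is a nontrivial tight cut of $G-f$. Since $G'-f$ and $G''$ are matching covered, so is $G-f$, and $b(G-f)=b(G'-f)+b(G'')\ge 2$ (a bipartite $G''$ would make $C$ tight already in $G$). This really happens. In the Wagner graph on the cycle $0,1,\dots,7$ with chords $i(i+4)$, replace $0$ and $1$ by triangles and let $\overline{X}$ be the triangle replacing $1$. Then $f=67$ is $b$-invariant in $G'$, but every perfect matching of $G$ using all of $C$ contains $67$, so $b(G-67)=2$.

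You also set aside the edges of $G'$ incident with the contracted vertex. So even a valid lifting would not show that $G'$ has the property, and the induction does not close. Your justification that forcing is inherited runs the wrong way too: perfect matchings of $G$ using all of $C$ do not restrict to $G'$. What works is extending a perfect matching of $G'$ by one of $G''$ through the same cut edge.

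Your fallback is sound as far as it goes: a $b$-invariant edge exists and is forcing, so by Wu et al.\ $G$ is an iterated triangle expansion of $K_4$. But that class is infinite, so ``check directly which expansions keep every $b$-invariant edge forcing'' is not a finite verification. What is missing is a bound on $|V(G)|$, or an explicit non-forcing $b$-invariant edge in every sufficiently large expansion. In the near-bipartite part of the paper this role is played by \cref{VGUV}. It combines the local structure forced by a forcing edge (\cref{cl-e12}) with the bound on non-$b$-invariant edges at a vertex (\cref{lem:invariant}). You need a cubic counterpart of it before any case check can produce exactly $G_1,\dots,G_7$.
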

\begin{figure}[H]
   \centering
   \includegraphics[scale=0.3]{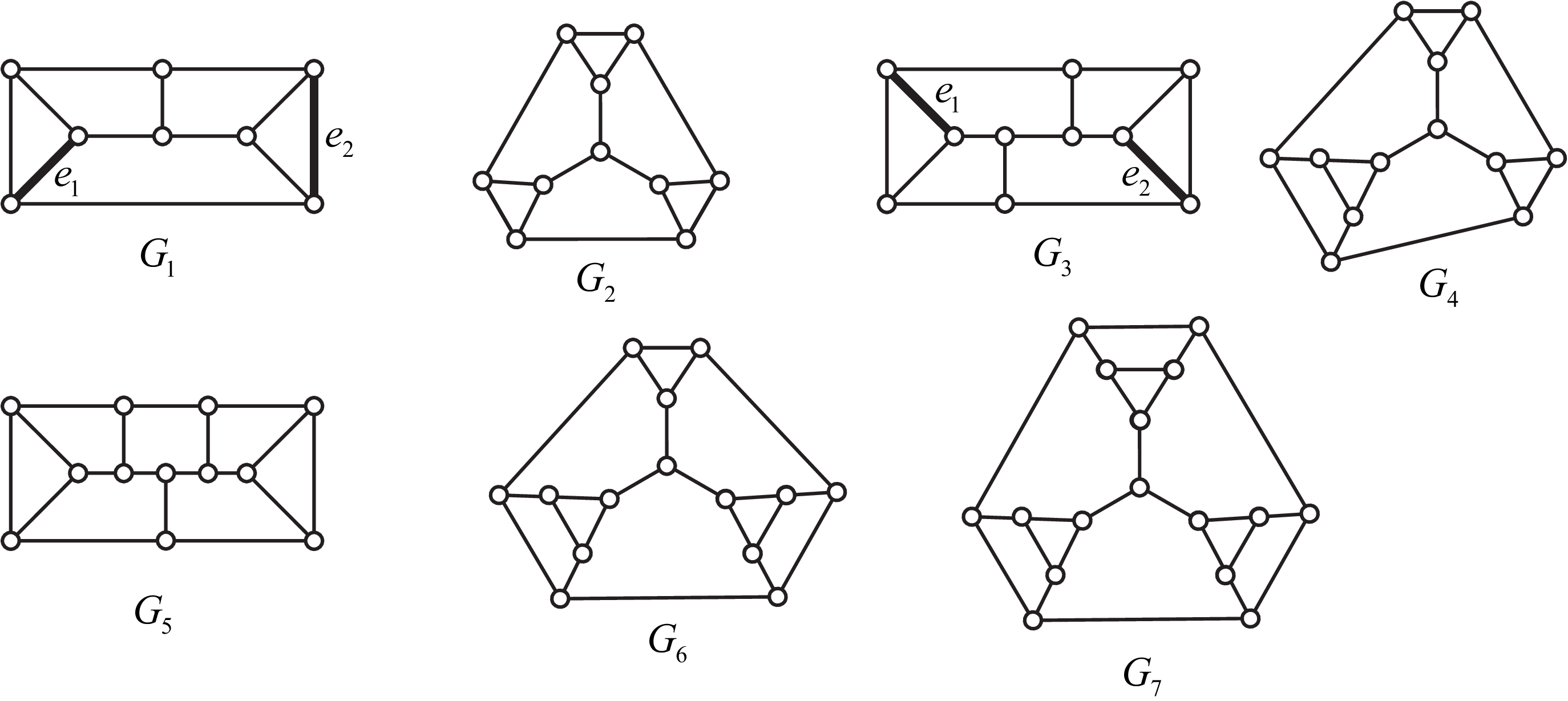}
   \caption{\label{f4} Illustration of \cref{th1}.}
\end{figure}
A pair $\{e_1,e_2\}$ of edges in a matching covered graph $G$ is a \emph{removable doubleton}
if $G-e_1-e_2$ is matching covered, but neither $G-e_1$ nor $G-e_2$ is.
A nonbipartite matching covered graph $G$ is \emph{near-bipartite} if it has a removable doubleton
whose deletion yields a bipartite matching covered graph.
Near-bipartite graphs have also been studied from several other viewpoints.
Fischer and Little \cite{FischerLittle2001} characterized Pfaffian near-bipartite
graphs in terms of forbidden subgraphs. Miranda and Lucchesi
\cite{MirandaLucchesi2010} extended this characterization to weakly
near-bipartite graphs and gave a polynomial-time recognition algorithm.
Kothari \cite{Kothari2019} obtained generation and structure theorems for
near-bipartite bricks.
\begin{Not}
   \label{note:cubic-near-bipartite}
   Among all bricks in \cref{f4}, only $G_1$ and $G_3$ are near-bipartite. Moreover, $K_4$ and $\overline{C_6}$ are near-bipartite, whereas the Petersen graph is not.
\end{Not}

Let $\mathcal F$ denote the union of the eight graph families shown in \cref{nf1}.
More recently, Zhang et al.
\cite{ZhangRemovable2025} studied removable edges in near-bipartite bricks.
Building on this work, Zhang et al. \cite{ZLZ2026} proved that every
near-bipartite brick with at least six vertices has at least
$(|V(G)|-6)/2$ $b$-invariant edges. Using this result as a principal tool, we
settle \cref{prob1} for near-bipartite bricks.

\begin{The}
   \label{th2}
   Let $G$ be a near-bipartite brick. Then every $b$-invariant edge of $G$ is a forcing edge if and only if $G\in\mathcal F$.
\end{The}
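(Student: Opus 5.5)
We split the proof into the two directions of \cref{th2}.

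\emph{Sufficiency.} For each of the eight families in $\mathcal F$ (\cref{nf1}), we describe the graphs explicitly: a bipartite matching covered graph $H$ with colour classes $A,B$, plus a removable doubleton $\{e_1,e_2\}$ joining vertices in the same class. We first determine all removable edges, using the standard criterion that $e=uv$ is removable if and only if no barrier of $G-e$ separates $u$ from $v$. For each removable $e$ we then decide whether $b(G-e)=1$ by exhibiting a nontrivial tight cut of $G-e$ or showing there is none. For every $b$-invariant edge $e=uv$ we verify that it is forcing by checking that $G-u-v$ has a unique perfect matching. Equivalently, we show that $G-u-v$ is bipartite with a forced structure, for instance a ladder-like or path-like graph in which every vertex of degree one forces its matching edge recursively. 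Since each family is parametrized by a length parameter, we will carry this out by induction on that parameter, using the recursive shape of the family.

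\emph{Necessity.} Let $G$ be a near-bipartite brick in which every $b$-invariant edge is forcing. Small cases ($|V(G)|\le 8$, or a bounded size we choose) are handled by a finite check together with \cref{th1} and \cref{note:cubic-near-bipartite}. For larger $G$ we invoke the result of Zhang et al.\ \cite{ZLZ2026}, which gives at least $(|V(G)|-6)/2$ $b$-invariant edges, all of them forcing.

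The key structural step is this. If $e=uv$ and $f=xy$ are both forcing with disjoint ends, then the unique perfect matchings containing them severely restrict alternating cycles. Concretely, $G-u-v$ has a unique perfect matching $M_e$, so by Kotzig's theorem $G-u-v$ has a bridge lying in $M_e$. Iterating this yields a nested sequence of forced edges, and we aim to show that the bipartite part $H=G-e_1-e_2$ becomes a chain of small pieces glued along the forced edges. We combine this with the lower bound on the number of $b$-invariant edges: many forcing edges, each inducing such a bridge structure, force $H$ to be essentially a ``linear'' bipartite graph, such as a ladder or a cycle with chords arranged in series. We then classify how $e_1$ and $e_2$ can attach to the two ends of this chain, and each attachment pattern should produce exactly one family of \cref{nf1}. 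Along the way we use that $G$ is 3-connected and bicritical to rule out degenerate attachments.

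\emph{Main obstacle.} We expect the hardest step to be turning ``many forcing $b$-invariant edges'' into global structure. The bound $(|V(G)|-6)/2$ locates many $b$-invariant edges but not where they lie. Our first attempt is a minimal-counterexample approach: take a $b$-invariant edge $e$ such that $G-e$, after bicontracting degree-two vertices, is again a near-bipartite brick $G'$. We then show that every $b$-invariant edge of $G'$ lifts to a $b$-invariant edge of $G$ and that forcing is preserved. Induction then gives $G'\in\mathcal F$, and it remains to check which ways of re-adding $e$ keep every $b$-invariant edge forcing; this should reproduce the recursive descriptions of the families. The delicate point is the lifting of the $b$-invariant property across bicontraction, which will likely require the removable-edge analysis of \cite{ZhangRemovable2025}.
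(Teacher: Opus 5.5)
Your necessity argument has a genuine gap, and it aims at the wrong structure. The members of $\mathcal F$ are not long ladders or chains indexed by a length. Their underlying simple graphs are only the seven graphs $K_4$, $\overline{C_6}$, $G_1$, $G_3$, $G_8$, $G_9$, $G_{10}$, with at most ten vertices, and the families differ only in the multiplicities of the dashed edge classes. So what you need is a size bound, and the global count $(|V(G)|-6)/2$ does not locate the edges, as you note.

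The missing idea is local, and it works in $H=G-R$ rather than in $G$. If $uv\in E(H)$ is forcing in $G$, it is forcing in $H$. Hence the \emph{bipartite} graph $H-u-v$ has a unique perfect matching, and so it has a vertex of degree one in each colour class. Such a vertex has $H$-degree at most two, so by $3$-connectivity it must be a degree-$3$ end of $e_1$ (resp.\ $e_2$) adjacent to $v$ (resp.\ $u$). Applying Kotzig's theorem to the nonbipartite $G-u-v$ throws away exactly this colour-class information.

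Combine this with the \emph{local} form of the Zhang et al.\ result: each vertex outside $V(R)$ meets at most two non-$b$-invariant edges unless it lies in a triangle of degree-$3$ vertices. Then every vertex of $U\setminus V(R)$ is adjacent to a degree-$3$ end of $e_2$, so $U=V(e_1)\cup N_H(V_{=3}^{e_2})$. The same holds for $V$, giving $|V(G)|\le 12$. A case analysis on the degrees of the ends of $e_1,e_2$ then yields $G_8,G_9,G_{10}$ in the noncubic case, and the cubic classification settles the rest. Your plan also omits two steps: the reduction to the underlying simple graph, and the final determination of which edges may carry multiplicity. Each such edge must be forcing in the simple graph, and no two may lie in a common perfect matching.

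Your fallback induction breaks at the lifting step, and the cause is deletion, not bicontraction. If $e$ is forcing with unique perfect matching $M_e$, every edge of $M_e\setminus\{e\}$ is nonremovable in $G$, since deleting it makes $e$ inadmissible. Yet such an edge can become $b$-invariant once $e$ is gone.

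Here is a concrete instance. Let $G_8$ be $K_{3,3}$ on $\{u_1,u_2,u_3\}\cup\{v_1,v_2,v_3\}$ minus $u_1v_1$, plus $u_1u_2$ and $v_1v_2$. The edge $u_2v_3$ is $b$-invariant and forcing, with unique perfect matching $\{u_1v_2,u_2v_3,u_3v_1\}$. Deleting it and bicontracting $v_3$ gives $K_4$ with one doubled edge, in which the image of $u_1v_2$ is $b$-invariant. But $u_1v_2$ is not even removable in $G_8$. So $b$-invariant edges of $G'$ do not lift, and this route cannot establish that the hypothesis passes to $G'$, which is the real difficulty.

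For sufficiency, a direct finite check is possible in principle, but there is no length parameter to induct on. Also, $G-u-v$ need not be bipartite, since it still contains $e_1$ and $e_2$. The paper's route is much shorter. Every member of $\mathcal F$ satisfies $|\mathcal M(G)|=|E(G)|-|V(G)|+1$, because each dashed edge lies in a unique perfect matching of the simple graph and that matching contains no other dashed edge. So each member is an extremal brick, and every $b$-invariant edge of an extremal brick is solitary by de~Carvalho, Lucchesi and Murty.
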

\begin{figure}[H]
   \centering
   \includegraphics[width=0.90\linewidth,keepaspectratio]{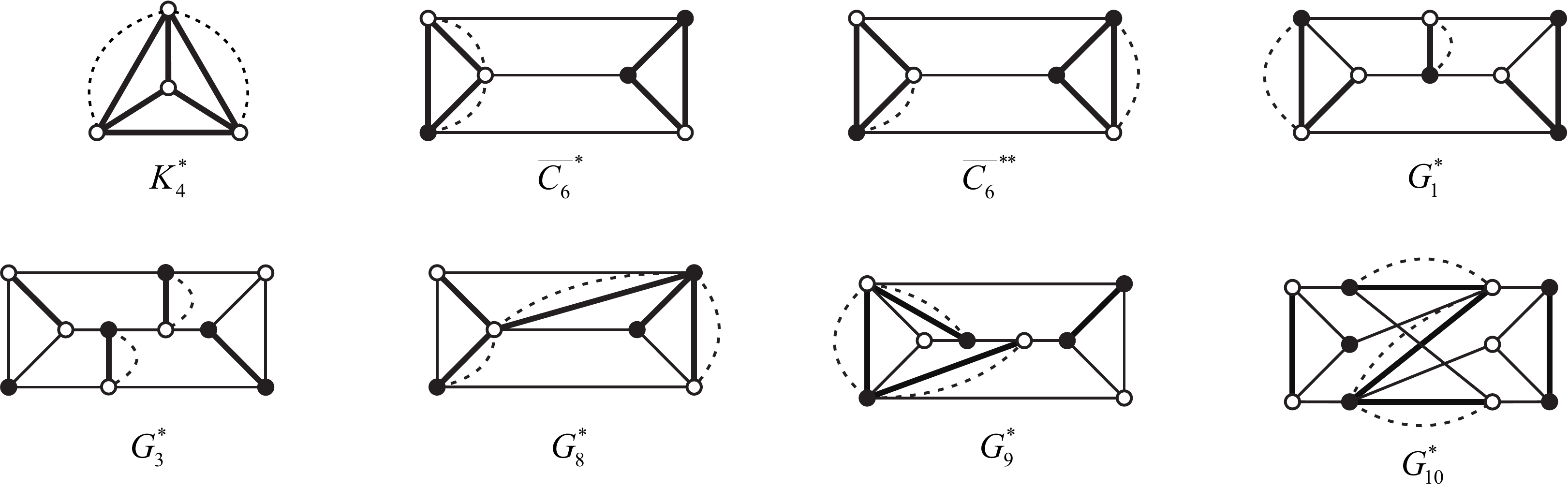}
   \caption{\label{nf1}The eight graph families in $\mathcal F$. Dashed lines indicate the edge classes that may have arbitrary positive multiplicity.}
\end{figure}
\begin{Pro}
   \label{not:extremal}
   Every graph in $\mathcal F$ is an extremal brick.
\end{Pro}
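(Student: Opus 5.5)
The plan is to verify, for each of the eight families in $\mathcal F$ separately, two things: that every member is a brick, and that its number of perfect matchings equals $|E(G)|-|V(G)|+1$. Since the dimension of the perfect matching lattice of a brick is $|E(G)|-|V(G)|+1$, these two facts together give extremality.

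\textbf{Brick property.} By the theorem of Edmonds et al.\ quoted in the introduction, it suffices to show that each graph is 3-connected and bicritical.

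\begin{enumerate}[(i)]
\item For 3-connectivity, I will work with the underlying simple graph, since edge multiplicities do not affect it. I will observe that each family is built from a bipartite ``ladder'' or wheel-like core together with a removable doubleton. Removing any two vertices leaves the graph connected; this follows from a short case check on whether the two vertices lie on the rim, on the hub, or on an end of a doubleton edge.
\item For bicriticality, I will exhibit explicitly, for each pair $x,y$, a perfect matching of $G-x-y$. If $x,y$ lie in different colour classes of the bipartite graph $H=G-e_1-e_2$, I will use a perfect matching of $H-x-y$. Otherwise I will use one of $e_1,e_2$ to correct the parity and then extend along the ladder structure.
\end{enumerate}

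\textbf{Counting perfect matchings.} This is the core step. I intend to proceed by induction along the parameter indexing each family, for example the number of rungs, or of spokes in an odd-wheel-type family.

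\begin{enumerate}[(i)]
\item \emph{Multiplicities.} Each dashed edge class of multiplicity $m$ contributes $m-1$ to $|E|-|V|+1$. It also multiplies, by $m$, the number of perfect matchings using that class. Since the right count is additive in the edges, I must show that each dashed class lies in exactly one perfect matching of the underlying simple graph. That is, every edge of a dashed class is solitary, so adding a parallel copy raises both sides by exactly one.
\item \emph{Reduction to base cases.} With (i) in hand, it suffices to treat the simple members. For these, I will show that passing from one member to the next adds $k$ edges and $k-1$ vertices, or an equivalent change, and increases the number of perfect matchings by exactly one. To count the new perfect matchings, I will split according to whether the perfect matching uses the new edge. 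Those avoiding it correspond bijectively to the perfect matchings of the smaller member, via a bi-splitting or retraction of a degree-two vertex. Those using it are unique, because that edge is forcing.
\item \emph{Base cases.} These are small graphs such as $K_4$, $\overline{C_6}$ and the prism-like bricks. I will check them directly, for instance $K_4$ has $3=6-4+1$ perfect matchings and $\overline{C_6}$ has $4=9-6+1$.
\end{enumerate}

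\textbf{Main obstacle.} I expect the hardest part to be certifying that each added edge, or each dashed class, is forcing in the relevant member. The danger is that the removable doubleton edges create alternating cycles that produce extra perfect matchings. I would first try to show that, after fixing such an edge, the remaining graph is a bipartite graph with a unique perfect matching, for instance by finding a forced sequence of pendant vertices. Then I would handle the doubleton edges separately by noting that any perfect matching of a near-bipartite graph uses both or neither of $e_1,e_2$.

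As a fallback, I would use the criterion of de Carvalho et al.\ quoted above: it suffices to show that every $b$-invariant edge $e$ is solitary and that $G-e$ is extremal. Here $G-e$ is matching covered and, after bicontraction, is a smaller member of the same family, so an induction on $|E(G)|$ applies.
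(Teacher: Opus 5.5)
Your plan rests on a misreading of $\mathcal F$. There is no rung or spoke parameter. As the caption of \cref{nf1} indicates, the only free parameters are the multiplicities of the dashed edge classes, and the underlying simple graph of every member is one of seven fixed graphs: $K_4$, $\overline{C_6}$, $G_1$, $G_3$, $G_8$, $G_9$, $G_{10}$. So step (ii) has nothing to act on, since inside a family passing to a larger member just adds a parallel copy, which is step (i). The fallback is also wrong as stated. Deleting a $b$-invariant edge from a simple member takes you out of its family: for instance, in $G_8$ the edge joining the two vertices of degree four is $b$-invariant, and deleting it gives $\overline{C_6}$. Running that induction across all of $\mathcal F$ would require locating every $b$-invariant edge of each graph, which is more work than counting. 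What must actually be done is your step (iii) for all seven graphs, not just $K_4$, $\overline{C_6}$ and ``prism-like'' ones. You check that each is a brick and that $(|V|,|E|,|\mathcal M|)$ is $(8,12,5)$, $(10,15,6)$, $(6,10,5)$, $(8,13,6)$, $(10,16,7)$ for $G_1,G_3,G_8,G_9,G_{10}$, respectively. That finite check is exactly how the paper handles the simple case.

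The second gap is in step (i). Knowing that each dashed edge $e$ lies in a unique perfect matching $M_e$ of the underlying simple graph $B$ does not make the count additive. In general $|\mathcal M(G)|=\sum_{M\in\mathcal M(B)}\prod_{f\in M\cap D}m_f$. So a perfect matching $M$ of $B$ contributes $\prod_{f\in M\cap D}m_f$ to $|\mathcal M(G)|$ but only $1+\sum_{f\in M\cap D}(m_f-1)$ to $|E(G)|-|V(G)|+1$. These differ as soon as $M$ contains two dashed edges of multiplicity at least two. In that situation a copy of $e$ is not solitary in $G$, and adding a further copy raises $|\mathcal M|$ by the product of the other multiplicities in $M_e$, not by one. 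You must also verify $M_e\cap D=\{e\}$ for every $e\in D$, which is the extra condition the paper checks by inspection. With it, $|\mathcal M(G)|=|\mathcal M(B)|+\sum_{e\in D}(m_e-1)=|E(G)|-|V(G)|+1$. Your brick argument via 3-connectivity and bicriticality is fine and, as you note, unaffected by multiplicities. With these two repairs your plan becomes the paper's proof.
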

\begin{proof}
A direct check shows that the seven underlying simple graphs are bricks. Their numbers of vertices, edges and perfect matchings are as follows.
\begin{center}
\begin{tabular}{c|c|c|c}
$B$ & $|V(B)|$ & $|E(B)|$ & $|\mathcal M(B)|$ \\ \hline
$K_4$ & $4$ & $6$ & $3$ \\
$\overline{C_6}$ & $6$ & $9$ & $4$ \\
$G_1$ & $8$ & $12$ & $5$ \\
$G_3$ & $10$ & $15$ & $6$ \\
$G_8$ & $6$ & $10$ & $5$ \\
$G_9$ & $8$ & $13$ & $6$ \\
$G_{10}$ & $10$ & $16$ & $7$
\end{tabular}
\end{center}
Thus each satisfies $|\mathcal M(B)|=|E(B)|-|V(B)|+1$ and is extremal.

Let $G\in\mathcal F$, let $B$ be its underlying simple graph, and let $D\subseteq E(B)$ consist of the edge classes with multiplicity greater than one. Inspection of the perfect matchings of the graphs in \cref{nf1} shows that each edge $e\in D$ belongs to a unique perfect matching of $B$, which contains no other edge of $D$. Hence each parallel copy of $e$ belongs to exactly one perfect matching of $G$ and is therefore a forcing edge of $G$.

If $m_e$ is the multiplicity of $e$, then
\[
|\mathcal M(G)|=|\mathcal M(B)|+\sum_{e\in D}(m_e-1)
=|E(G)|-|V(G)|+1.
\]
Adding parallel copies preserves the brick property. Hence $G$ is an extremal brick.
\end{proof}

We obtain the following consequence.
\begin{Cor}
   A near-bipartite brick is extremal if and only if all its $b$-invariant edges are forcing.
\end{Cor}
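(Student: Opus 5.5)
The corollary follows by combining \cref{th2}, \cref{not:extremal}, and the theorem of de Carvalho et al.\ \cite{Carvalho2004} quoted in the introduction. There are two directions.

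Suppose first that $G$ is an extremal near-bipartite brick. The result of \cite{Carvalho2004} says that a matching covered graph is extremal if and only if every $b$-invariant edge $e$ is solitary and $G-e$ is extremal. Applying the forward implication to $G$ shows that every $b$-invariant edge of $G$ is solitary, that is, a forcing edge. This direction does not use near-bipartiteness.

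Conversely, suppose $G$ is a near-bipartite brick in which every $b$-invariant edge is forcing. By \cref{th2}, $G\in\mathcal F$. By \cref{not:extremal}, every member of $\mathcal F$ is an extremal brick, so $G$ is extremal.

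The only point needing care is whether the small bricks $K_4$ and $\overline{C_6}$, which have no $b$-invariant edges, are handled consistently. For them the condition ``all $b$-invariant edges are forcing'' holds vacuously. By \cref{note:cubic-near-bipartite} they are near-bipartite, so \cref{th2} places them in $\mathcal F$. The table in the proof of \cref{not:extremal} then confirms that they are extremal: $3=6-4+1$ for $K_4$ and $4=9-6+1$ for $\overline{C_6}$. Hence both directions also hold for these graphs, and no separate argument is needed.

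I expect no real obstacle here. The substantive work lies in \cref{th2}, which the corollary merely repackages together with \cref{not:extremal}.
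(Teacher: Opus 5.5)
Your proposal is correct and follows the paper's proof exactly. The paper also gets the extremal $\Rightarrow$ forcing direction from the result of de Carvalho et al.\ \cite{Carvalho2004}, and the converse by combining \cref{th2} with \cref{not:extremal}. Your extra check of $K_4$ and $\overline{C_6}$ is consistent with the paper but not needed, since \cref{th2} already covers them.
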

\begin{proof}
   Necessity follows from \cite{Carvalho2004}, and sufficiency follows from \cref{th2,not:extremal}.
\end{proof}

\section{Preliminaries}

For the structural arguments below, let $G$ be a simple near-bipartite brick
with a fixed removable doubleton $R=\{e_1,e_2\}$, and set $H=G-R$.
Then $H$ is a bipartite matching covered graph. We fix a bipartition
$(U,V)$ of $H$ and write $e_1=u_1u_2$ and $e_2=v_1v_2$ so that
$\{u_1,u_2\}\subseteq U$ and $\{v_1,v_2\}\subseteq V$.
We use this notation throughout without further mention.

\begin{Lem}
   \cite{ZLZ2026}
   \label{lem:invariant}
   Let $G$ be a near-bipartite brick other than $K_4$.
   Then the following statements hold.
   \begin{enumerate}[(i)]
      \item
      Every vertex in $V(G)\setminus V(R)$, except at most two vertices,
      is incident with at most two non-$b$-invariant edges of $G$.
      Further, if $x\in V(G)\setminus V(R)$ is incident with at least
      three non-$b$-invariant edges of $G$, then $x$ belongs to a triangle whose vertices all have degree three.
      \item
      Every vertex in $V(R)$ is incident with at most three non-$b$-invariant edges of $G$.
   \end{enumerate}
\end{Lem}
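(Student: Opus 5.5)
This lemma is quoted from \cite{ZLZ2026}, so we may use it as a black box. If we wanted to reprove it, the plan would be as follows.

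The first step is to characterize when an edge $e\in E(H)$ fails to be removable in $G$. Since $G$ is a brick, $G-e$ is matching covered unless some edge $f$ becomes inadmissible, that is, $e$ and $f$ are mutually dependent. In the near-bipartite setting this dependence is controlled by the bipartite graph $H$. An edge $e$ of $H$ is non-removable in $G$ only if $e$ is non-removable in $H$, or $H-e$ loses the property that $R$ can still be used. The non-removable edges of $H$ come from nontrivial barriers of $H-e$, namely sets $S\subseteq U$ (or $V$) with $|N(S)|=|S|$ after deletion. Next, for removable edges we would bound when $b(G-e)>b(G)$. By the theorem of de Carvalho et al. on robust cuts, this happens exactly when $G-e$ has a nontrivial barrier whose tight cut decomposition produces two bricks. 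Because $G-e-R$ is bipartite, such a barrier forces an odd set $X$ with $|\partial(X)\cap\{e_1,e_2\}|$ restricted. This gives $b(G-e)\le 2$, with equality tied to a specific $3$-cut structure around $e$.

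With this local description, fix a vertex $x\notin V(R)$ and suppose three edges $xa,xb,xc$ at $x$ are non-$b$-invariant. Each such edge comes with a witness barrier, or a $2$-separation witness for $b(G-e)=2$. We would show that two witnesses for distinct edges at $x$ must be crossing or nested. Uncrossing them by submodularity of $|N(S)|-|S|$ in $H$, together with $3$-connectivity and bicriticality of $G$, forces each witness set to be very small. It follows that $x$ lies in a triangle of degree-$3$ vertices, for instance a triangle formed using $e_1$ or $e_2$. Such triangles can involve only a bounded number of vertices outside $V(R)$, since a triangle in near-bipartite $G$ must use an edge of $R$. We then count that at most two vertices qualify, one triangle through $e_1$ and one through $e_2$, each contributing one vertex of $V(G)\setminus V(R)$.

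For (ii), a vertex $u_1\in V(R)$ carries the edge $e_1$, which is never removable since $R$ is a doubleton. By the same barrier analysis, at most two further incident edges can be non-$b$-invariant.

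The main obstacle is the uncrossing step for $b$-invariance failures (as opposed to non-removability). This needs the robust-cut and $3$-cut machinery, and it is where the case analysis in \cite{ZLZ2026} is concentrated.
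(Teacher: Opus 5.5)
Your reliance on the citation matches the paper: the paper gives no proof of this lemma, but imports it from \cite{ZLZ2026} and uses it as a black box. Your first sentence is therefore exactly the paper's treatment, and there is no argument in the paper to compare your sketch against.

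The reproof plan you append is not a proof, and it has a genuine gap. The whole content of (i) is the implication ``three non-$b$-invariant edges at $x\notin V(R)$ $\Rightarrow$ $x$ lies in a triangle whose vertices all have degree three''. Your sketch gives no mechanism for it. It does not say why uncrossing barrier (or $2$-separation) witnesses would make them ``very small'', nor how small witnesses would produce a triangle. Since $H$ is bipartite, every triangle of $G$ contains $e_1$ or $e_2$. So what must actually be shown is that $x$ is a common neighbour of $u_1,u_2$ (or of $v_1,v_2$) and that these three vertices have degree three. Your outline never relates the witnesses to $R$ at all. Part (ii) is only restated (``by the same barrier analysis'').

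Some of the preliminary facts are also wrong or unsupported:
\begin{enumerate}[(a)]
\item Non-removability of $e$ means that some edge $f$ \emph{depends} on $e$ (every perfect matching containing $f$ contains $e$). It does not mean that $e$ and $f$ are mutually dependent. Take the near-bipartite brick $\overline{C_6}$ with triangles $a_1a_2a_3$, $b_1b_2b_3$ and rungs $a_ib_i$. The edge $a_2a_3$ lies only in $\{a_1b_1,a_2a_3,b_2b_3\}$, so it depends on $a_1b_1$. Yet $a_1b_1$ also lies in $\{a_1b_1,a_2b_2,a_3b_3\}$, so the dependence is not mutual.
\item You assert without justification that $b(G-e)>1$ happens ``exactly'' when a barrier of $G-e$ produces two bricks, and that $b(G-e)\le 2$.
\item Your final count of at most two exceptional vertices is correct, but it needs a line of proof. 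Suppose $e_1$ lay in two triangles $u_1u_2w$ and $u_1u_2w'$ whose vertices all have degree three. Then $N_H(u_1)=N_H(u_2)=\{w,w'\}$. If $ww'\in E(G)$, it must be $e_2$ and $G=K_4$. Otherwise $w$ and $w'$ each have one further neighbour, so $\{w,w'\}$ is a $2$-vertex cut separating $\{u_1,u_2\}$, contradicting $3$-connectivity. Hence each $e_i$ contributes at most one such vertex outside $V(R)$.
\end{enumerate}
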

For a vertex $x$ of a graph $G$, denote the \emph{degree} of $x$, that is, the number of edges incident with $x$, by $d_G(x)$ (or simply $d(x)$ when no confusion can arise), and denote the minimum degree of $G$ by $\delta(G)$.
The vertex $x$ is called a \emph{$k$-degree vertex} if $d_G(x)=k$.
For $\emptyset\neq S\subseteq V(G)$, let $N_G(S)$ denote the set of vertices in $V(G)\setminus S$ that have a neighbor in $S$.
When $S=\{x\}$, we write $N_G(x)$ instead of $N_G(\{x\})$.
\begin{The}
   {\rm\cite{lovasz2009matching}}
   \label{B.U.PM}
   If $G$ is a bipartite graph with a unique perfect matching,
   then $G$ has two vertices of degree one, one in each partite set.
\end{The}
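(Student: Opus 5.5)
The proof goes by induction on $|V(G)|$; along the way we establish a matching lower bound on the minimum degree, namely $\delta(G)\ge 1$ when $G$ has at least two vertices. Since $G$ has a perfect matching $M$, it has no isolated vertex, so every vertex has degree at least one. The task is to exhibit two degree-one vertices, one in each part.

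Take a maximal $M$-alternating path $P$, meaning a path whose edges alternate between $M$ and $E(G)\setminus M$. Starting from any edge $ab\in M$, extend the path greedily at both ends, alternating between non-matching and matching edges, until no further extension is possible. Because $G$ is finite, the process stops.

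First, an $M$-alternating cycle cannot occur. If $C$ were one, then $M\triangle E(C)$ would be a second perfect matching, contradicting uniqueness. The same fact controls the ends of $P$. Let $P=x_0x_1\cdots x_k$ be a longest path whose edges alternate and whose first edge $x_0x_1$ lies in $M$. Suppose $x_0$ had a neighbour $y\ne x_1$. If $y\notin V(P)$, then $y$ is matched by $M$ to some $y'$, and $y'\notin V(P)$ because the matching partners of the vertices of $P$ already lie on $P$ (ensuring this is why the path should be arranged to start and end with matching edges). Prepending $y'y\,x_0$ would then give a longer alternating path, a contradiction. If $y=x_i\in V(P)$, then bipartiteness forces $i$ odd. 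Every odd-indexed vertex $x_i$ is matched to $x_{i-1}$, so the cycle $x_0x_1\cdots x_ix_0$ alternates, with its closing edge $x_ix_0$ outside $M$. That cycle is forbidden, so this case is impossible as well.

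To make the bookkeeping clean, I will use a slightly different path: choose a longest $M$-alternating path whose first and last edges both belong to $M$. It then has an even number of vertices, $k$ is odd, and $x_0$ and $x_k$ lie in opposite parts. The argument above shows $d(x_0)=1$, and the symmetric argument at the other end gives $d(x_k)=1$.

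The only real obstacle is verifying the parity and membership claims in the case $y=x_i$. Specifically, I need that $x_i$ is matched along $P$ to $x_{i-1}$ rather than to $x_{i+1}$, so that the closing edge $x_ix_0$ is a non-matching edge and the cycle genuinely alternates. This follows from indexing: the edges $x_{2j}x_{2j+1}$ are the matching edges and $x_0\in U$, so every neighbour of $x_0$ on $P$ is some $x_{2j+1}$, whose partner is $x_{2j}$. That is exactly what the cycle requires.
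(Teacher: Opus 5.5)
Your proof is correct. The paper gives no proof of this statement: it quotes it from Lov\'asz and Plummer and uses it as a black box. So your argument is not an alternative to an in-paper proof but a self-contained replacement for the citation, and it is the standard one.

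The structure is sound:
\begin{itemize}
\item Uniqueness of $M$ excludes $M$-alternating cycles, since $M\triangle E(C)$ would be a second perfect matching.
\item The ends $x_0,x_k$ of a longest $M$-alternating path whose first and last edges lie in $M$ have degree one.
\item They lie in opposite parts because $k$ is odd.
\end{itemize}
Both of your cases at $x_0$ are sound. An off-path neighbour $y$ has its $M$-partner $y'$ off $P$, because $V(P)$ is covered by $M\cap E(P)$; hence $y'yx_0\cdots x_k$ is a longer path of the same type. An on-path neighbour must be $x_i$ with $i\ge 3$ odd. Its partner is $x_{i-1}$, so $x_0x_1\cdots x_ix_0$ is $M$-alternating.

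The write-up should be tidied in three places:
\begin{itemize}
\item The opening announcement of an induction on $|V(G)|$ and of the bound $\delta(G)\ge 1$ is never used and should be deleted.
\item State the final choice of path (first and last edges in $M$) at the outset, instead of passing through the greedy and ``first edge only'' versions.
\item The paper allows parallel edges, so add one line showing that $x_1$ being the only neighbour really gives $d(x_0)=1$. An edge parallel to $x_0x_1$ would form an $M$-alternating $2$-cycle, i.e.\ it would yield a second perfect matching.
\end{itemize}
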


\begin{Lem}
   \label{FMNR}
   Let $e$ be a forcing edge of a matching covered graph $G$, and let $M_e$ be the unique perfect matching of $G$ containing $e$.
   Then each edge in $M_e\setminus \{e\}$ is a nonremovable edge of $G$.
\end{Lem}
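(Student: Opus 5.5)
Fix an edge $f\in M_e\setminus\{e\}$ and suppose, for contradiction, that $f$ is removable in $G$. Then $G-f$ is matching covered, so $e$ lies in some perfect matching $M$ of $G-f$. Since $M$ is also a perfect matching of $G$ containing $e$, and $M_e$ is the unique such matching, $M=M_e$. But $f\in M_e$ while $f\notin E(G-f)$, so $M\neq M_e$, a contradiction. Hence every edge of $M_e\setminus\{e\}$ is nonremovable.

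The plan is this direct argument: removability of $f$ would yield, through matching coveredness of $G-f$, a perfect matching containing $e$ and avoiding $f$, which the uniqueness of $M_e$ rules out. One minor point needs checking: the definition of removable requires $G-f$ to be matching covered, in particular connected with at least one edge. We only use the consequence that every edge of $G-f$, and in particular $e$ (which is distinct from $f$), lies in a perfect matching of $G-f$. No step is a genuine obstacle; the only care needed is to note $e\neq f$, so that $e$ survives in $G-f$.
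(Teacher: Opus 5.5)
Your proof is correct: if $f\in M_e\setminus\{e\}$ were removable, matching coveredness of $G-f$ would put $e$ in a perfect matching of $G$ that avoids $f$ and is therefore distinct from $M_e$, contradicting that $e$ is forcing. The paper states this lemma without proof, and your argument is the immediate justification it takes for granted.
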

 
Let $U_{=3}^{e_1}$ and $V_{=3}^{e_2}$ denote the sets of all 3-degree vertices in $V(e_1)$ and $V(e_2)$, respectively.

\begin{Lem}
   \label{cl-e12}
   Let $G$ be a near-bipartite brick.
   If $G$ contains a forcing edge $e=uv$, distinct from
   $e_1$ and $e_2$, with $u\in U$ and $v\in V$, then $U_{=3}^{e_1}\neq \emptyset$ and $V_{=3}^{e_2}\neq \emptyset$.
   Moreover, $u\in N_{H-e}(V_{=3}^{e_2})$ and $v\in N_{H-e}(U_{=3}^{e_1})$.
\end{Lem}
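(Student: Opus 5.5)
The plan is to show first that the unique perfect matching containing $e$ avoids $R$. Then I apply \cref{B.U.PM} to a suitable bipartite subgraph of $H$, and use the $3$-connectivity of $G$ to turn its degree-one vertices into degree-$3$ ends of $e_1$ and $e_2$.

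\emph{Step 1: $M_e\cap R=\emptyset$.} Let $M_e$ be the unique perfect matching of $G$ containing $e$. Since $H$ is a bipartite matching covered graph, $|U|=|V|$. An edge of $R$ joins two vertices of the same colour class, so every perfect matching of $G$ contains either both of $e_1,e_2$ or neither of them.

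Suppose $M_e$ contained both $e_1$ and $e_2$. Since $H$ is matching covered and $e\in E(H)$, there is a perfect matching $N$ of $H$ containing $e$. Then $N$ is also a perfect matching of $G$ containing $e$. It differs from $M_e$ because $e_1\notin N$, which contradicts the assumption that $e$ is forcing. Hence $M_e\subseteq E(H)$.

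\emph{Step 2: uniqueness in $H'=H-u-v$.} The graph $H'$ is bipartite with colour classes $U-u$ and $V-v$, and $M_e\setminus\{e\}$ is a perfect matching of it. If $N'$ is any perfect matching of $H'$, then $N'\cup\{e\}$ is a perfect matching of $G$ containing $e$, so $N'=M_e\setminus\{e\}$. Thus $H'$ has a unique perfect matching.

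$H'$ is nonempty, because a brick has at least four vertices. By \cref{B.U.PM}, $H'$ therefore has a vertex $x\in U-u$ and a vertex $y\in V-v$ with $d_{H'}(x)=d_{H'}(y)=1$.

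\emph{Step 3: degree count.} We may take $G$ simple, as in the standing convention. The neighbours of $x$ in $H$ lie in $V$, so $d_H(x)\le d_{H'}(x)+1=2$, with the extra edge present only if $x$ is adjacent to $v$. In passing from $H$ to $G$ only the edges $e_1,e_2$ are added, and $x\in U$ can meet only $e_1$. Hence $d_G(x)\le 3$, with equality only if both of the following hold:
\begin{itemize}
\item $x\in\{u_1,u_2\}$;
\item $x$ is adjacent to $v$ by an edge of $H$, which is different from $e$ because $x\neq u$.
\end{itemize}

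Since a brick is $3$-connected, $d_G(x)\ge 3$, so equality holds. Therefore $x\in U_{=3}^{e_1}$ and $v\in N_{H-e}(x)\subseteq N_{H-e}(U_{=3}^{e_1})$. The symmetric argument applied to $y$ gives $y\in V_{=3}^{e_2}$ and $u\in N_{H-e}(V_{=3}^{e_2})$. In particular both sets are nonempty.

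The crux is Step 1, which excludes $e_1,e_2\in M_e$; once $M_e\subseteq E(H)$ is known, the rest is a direct degree count.
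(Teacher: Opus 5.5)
Your proof is correct and follows the paper's argument essentially step for step: the unique perfect matching containing $e$ lies in $H$, so $H-u-v$ has a unique perfect matching, \cref{B.U.PM} gives a degree-one vertex in each colour class, and $3$-connectivity forces these to be degree-$3$ ends of $e_1$ and $e_2$, adjacent in $H-e$ to $v$ and $u$ respectively. Your explicit checks that $H-u-v$ is nonempty and that the edge $xv$ differs from $e$ fill in small details that the paper leaves implicit.
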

\begin{proof}
   Since $H$ is matching covered and $e\in E(H)$,
   $e$ belongs to a perfect matching of $H$, say $M_e$. 
   Since $e$ is a forcing edge of $G$, it is also a forcing edge of $H$.
   Thus, the bipartite graph $H-u-v$ has a unique perfect matching $M_e\setminus \{e\}$.
   By \cref{B.U.PM}, $H-u-v$ contains a 1-degree vertex $u^*\in U$ and
   a 1-degree vertex $v^*\in V$, and thus $d_H(u^*)\leq 2$ and $d_H(v^*)\leq 2$.
   Since $G$ is 3-connected, $d_G(u^*)\geq 3$ and $d_G(v^*)\geq 3$.
   Since $d_G(u^*)\leq d_H(u^*)+1\leq 3$ and $d_G(v^*)\leq d_H(v^*)+1\leq 3$,
   equality holds throughout, i.e. $d_G(u^*)=d_G(v^*)=3$ and $d_H(u^*)=d_H(v^*)=2$.
   This implies that $\{u^*v,v^*u\}\subset E(H)$, and $u^*\in U_{=3}^{e_1}$ and $v^*\in V_{=3}^{e_2}$.
   So $u\in N_{H-e}(V_{=3}^{e_2})$ and $v\in N_{H-e}(U_{=3}^{e_1})$.
\end{proof}

The following results follow from \cref{cl-e12}.

\begin{Cor}
   \label{ea-v}
   Let $G$ be a near-bipartite brick with a vertex $x$ in $U\setminus V(R)$
   (resp. $V\setminus V(R)$).
   If $x$ is incident with a forcing edge of $G$, then it is adjacent to a vertex in $V_{=3}^{e_2}$ (resp. $U_{=3}^{e_1}$).
\end{Cor}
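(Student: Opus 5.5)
Let $x\in U\setminus V(R)$ be incident with a forcing edge $e$ of $G$; the case $x\in V\setminus V(R)$ is symmetric, with the roles of $U,V$ and of $e_1,e_2$ exchanged. The plan is to apply \cref{cl-e12} to $e$, and the only real work is checking that its hypotheses hold.

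First, $e$ is distinct from $e_1$ and $e_2$, since $x\notin V(R)$ while both ends of each edge of $R$ lie in $V(R)$. So $e\in E(H)$. As $H$ is bipartite with bipartition $(U,V)$ and $x\in U$, we can write $e=xv$ with $v\in V$.

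Applying \cref{cl-e12} to $e=uv$ with $u=x$ gives $x\in N_{H-e}(V_{=3}^{e_2})$. This means $x$ is adjacent in $H-e\subseteq G$ to some vertex of $V_{=3}^{e_2}$, which is the claim.

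In the symmetric case $x\in V\setminus V(R)$, write $e=ux$ with $u\in U$. The conclusion $v\in N_{H-e}(U_{=3}^{e_1})$ of \cref{cl-e12}, with $v=x$, says that $x$ is adjacent to a vertex of $U_{=3}^{e_1}$.

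There is essentially no obstacle here. The one point to verify is that $e\notin R$, so that $e$ has one end in each side of the bipartition of $H$, and this is exactly what the hypothesis $x\notin V(R)$ guarantees.
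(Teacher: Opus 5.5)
Your proof is correct and follows the paper's route exactly: the paper states this corollary as an immediate consequence of \cref{cl-e12}. The one check you make is the only one needed, namely that $x\notin V(R)$ forces $e\notin R$, so $e\in E(H)$ joins $U$ to $V$ and the lemma applies.
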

\begin{Cor}
   \label{N3}
   Let $G$ be a near-bipartite brick with a forcing edge $uv\in E(H)$,
   where $u\in U$ and $v\in V$.
   If $u\in \{u_1,u_2\}$, then $vu_1u_2v$ is a triangle of $G$ and $v\in N_G(U_{=3}^{e_1}\setminus \{u\})$;
   if $v\in \{v_1,v_2\}$, then $uv_1v_2u$ is a triangle of $G$ and $u\in N_G(V_{=3}^{e_2}\setminus \{v\})$.
\end{Cor}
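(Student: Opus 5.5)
The plan is to rerun the argument in the proof of \cref{cl-e12}, this time keeping track of \emph{which} vertex plays the role of $u^*$. By symmetry between $(U,e_1)$ and $(V,e_2)$ it suffices to treat $u\in\{u_1,u_2\}$; say $u=u_1$.

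First, since $uv$ is a forcing edge of $G$, it is also a forcing edge of $H$. Let $M_e$ be a perfect matching of $H$ containing $uv$. Then $H-u-v$ is bipartite with the unique perfect matching $M_e\setminus\{uv\}$.

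Second, by \cref{B.U.PM}, $H-u-v$ has a vertex $u^*\in U$ of degree one. Since $u\in U$ is deleted, $u^*\neq u$. The only vertex of $H$ adjacent to $u^*$ that is missing from $H-u-v$ is $v$, because $u$ lies on the same side as $u^*$. Hence $d_H(u^*)\le 2$. The brick $G$ is $3$-connected, and $u^*$ is incident with at most one edge of $R$ (namely $e_1$, and only if $u^*\in\{u_1,u_2\}$). As in \cref{cl-e12} we get $3\le d_G(u^*)\le d_H(u^*)+1\le 3$, so equality holds throughout. This has three consequences:
\begin{itemize}
\item $d_H(u^*)=2$, so $u^*$ is adjacent to $v$ in $H$.
\item $d_G(u^*)=d_H(u^*)+1$, so $u^*$ is an end of $e_1$.
\item $d_G(u^*)=3$.
\end{itemize}
Since $u^*\neq u_1$, we get $u^*=u_2$, and therefore $u_2\in U_{=3}^{e_1}\setminus\{u\}$.

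Finally, $G$ contains the edges $vu_1=uv$, $vu_2=vu^*$ and $u_1u_2=e_1$, so $vu_1u_2v$ is a triangle of $G$. Moreover, $v$ is adjacent to $u_2\in U_{=3}^{e_1}\setminus\{u\}$, which gives $v\in N_G(U_{=3}^{e_1}\setminus\{u\})$.

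The case $v\in\{v_1,v_2\}$ is identical, using the degree-one vertex $v^*\in V$ of $H-u-v$ and $e_2$.

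The only delicate point is the step showing $u^*\in\{u_1,u_2\}$ and $u^*\neq u$. This comes from the strict increase $d_G(u^*)=d_H(u^*)+1$, which can only occur at an end of $e_1$, together with the fact that $u$ has been deleted from $H-u-v$.
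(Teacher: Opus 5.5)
Your proposal is correct and follows essentially the same approach as the paper: rerun the argument of \cref{cl-e12}, note that the degree-one vertex $u^*$ of $H-u-v$ must satisfy $d_G(u^*)=d_H(u^*)+1=3$ and hence be an end of $e_1$ other than $u$, so $u^*=u_2$ is adjacent to $v$. Your write-up simply spells out the step that the paper compresses into ``By \cref{cl-e12}, $d_{G-u-v}(u_2)=1$''.
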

\begin{proof}
   If $u\in \{u_1,u_2\}$, then relabel the vertices so that $u=u_1$, so $uv=u_1v\in E(G)$.
   By \cref{cl-e12}, $d_{G-u-v}(u_2)=1$ and thus $d_G(u_2)=3$ and $u_2v\in E(G)$.
   So $vu_1u_2v$ is a triangle of $G$.
   Similarly, if $v\in \{v_1,v_2\}$, then $uv_1v_2u$ is a triangle of $G$, and
   $u\in N_G(V_{=3}^{e_2}\setminus \{v\})$.
\end{proof}
\begin{Lem}
   \label{VGUV}
   Let $G$ be a near-bipartite brick in which every $b$-invariant edge is a forcing edge.
   Then $U=V(e_1)\cup N_H(V_{=3}^{e_2})$ and $V=V(e_2)\cup N_H(U_{=3}^{e_1})$. Moreover $|V(G)|\leq 12$.
\end{Lem}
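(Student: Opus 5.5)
We argue vertex by vertex and use \cref{lem:invariant} to guarantee that almost every vertex meets a $b$-invariant edge, hence a forcing edge. Let $x\in U\setminus V(R)$. Since $G$ is a brick, $d_G(x)\geq 3$, and $x$ is not incident with $e_1$ or $e_2$, so every edge at $x$ lies in $H$. By \cref{lem:invariant}(i), unless $x$ is one of at most two exceptional vertices, $x$ meets at most two non-$b$-invariant edges. Hence $x$ meets at least one $b$-invariant edge, which by hypothesis is forcing. \cref{ea-v} then shows that $x$ is adjacent to a vertex of $V_{=3}^{e_2}$, so $x\in N_H(V_{=3}^{e_2})$. The same argument applies to $x\in V\setminus V(R)$, giving $x\in N_H(U_{=3}^{e_1})$.

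Next I will handle the at most two exceptional vertices. By \cref{lem:invariant}(i), such an $x$ lies in a triangle all of whose vertices have degree three. Since $H$ is bipartite, this triangle uses $e_1$ or $e_2$. If $x\in U$, the triangle must be $xv_1v_2$. Then $x$ is adjacent to $v_1,v_2$, both of which have degree three, so $x\in N_H(V_{=3}^{e_2})$ directly. The symmetric argument covers $x\in V$. If $K_4$ needs separate treatment because \cref{lem:invariant} excludes it, a direct check suffices: $U=\{u_1,u_2\}$ there. The vertices of $V(R)$ are included trivially. This proves $U=V(e_1)\cup N_H(V_{=3}^{e_2})$ and $V=V(e_2)\cup N_H(U_{=3}^{e_1})$.

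For the bound, we have $|V_{=3}^{e_2}|\leq 2$. Each vertex of $V_{=3}^{e_2}$ has degree three in $G$, and one of those edges is $e_2$, so it has at most two neighbours in $H$. Therefore $|N_H(V_{=3}^{e_2})|\leq 4$ and $|U|\leq 2+4=6$. Similarly $|V|\leq 6$, so $|V(G)|\leq 12$.

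The main obstacle is the exceptional-vertex step, together with possible overlaps that would allow a sharper count. The bound $12$ needs only the crude estimate above, so the only delicate point is confirming that the triangle in \cref{lem:invariant}(i) must contain $v_1v_2$ (respectively $u_1u_2$). This follows from bipartiteness of $H$.
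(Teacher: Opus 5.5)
Your proof is correct and follows essentially the same route as the paper. Vertices of $U\setminus V(R)$ that meet a $b$-invariant (hence forcing) edge are handled by \cref{cl-e12} (you cite its corollary \cref{ea-v}), the rest by the triangle clause of \cref{lem:invariant}(i), and the bound comes from $|N_H(V_{=3}^{e_2})|\le 4$; you also fill in two details the paper leaves implicit, namely why the triangle must be $xv_1v_2$ (bipartiteness of $H$) and the $K_4$ case excluded from \cref{lem:invariant}.
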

\begin{proof}
   Suppose $U\setminus (V(e_1)\cup N_H(V_{=3}^{e_2}))$ contains at least one vertex, say $u$.
   If $u$ is incident with at least one $b$-invariant edge of $G$, then such an edge must be a forcing edge of $G$.
   By \cref{cl-e12}, $u\in N_H(V_{=3}^{e_2})$, a contradiction.
   So $u$ is not incident with any $b$-invariant edge of $G$.
   By \cref{lem:invariant}, $u$ belongs to a triangle, each vertex of which has degree three.
   This implies that $u\in N_H(V_{=3}^{e_2})$, a contradiction. So $U=V(e_1)\cup N_H(V_{=3}^{e_2})$.
   Similarly, $V=V(e_2)\cup N_H(U_{=3}^{e_1})$.
   Since $|U|=|V(e_1)\cup N_H(V_{=3}^{e_2})|\leq 2+4=6$, it follows that $|V(G)|=2|U|\leq 12$.
\end{proof}

\section{Proof of \texorpdfstring{\cref{th2}}{Theorem~\ref*{th2}}}
Let $M$ be a perfect matching of a graph $G$. An even cycle $C$ in $G$ is
called $M$-alternating if its edges alternate between $M$ and
$E(G)\setminus M$; equivalently, $M\cap E(C)$ is a perfect matching of $C$.
A cycle of length $k$ is called a \emph{$k$-cycle}. 
For two nonempty proper subsets $X_1$ and $X_2$ of $V(G)$, let $E_G(X_1,X_2)=\{xy\in E(G):x\in X_1\text{ and }y\in X_2\}$. 
Recall that $G_i$ is the underlying graph of $G_i^*$ for $i\in\{1,3,8,9,10\}$. 
In the following, we first prove the sufficiency of \cref{th2}.

\vspace{7pt}
\noindent
\textbf{Sufficiency.}
By \cref{not:extremal}, every graph in $\mathcal F$ is extremal. Since every $b$-invariant edge of an extremal brick is forcing \cite{Carvalho2004}, sufficiency follows. In particular, $K_4$ and $\overline{C_6}$ have no $b$-invariant edges and satisfy the condition vacuously. The Petersen graph also has no $b$-invariant edges, but is not near-bipartite and hence is outside the scope of this theorem.

   Recall that $e_1=u_1u_2$, $e_2=v_1v_2$, $\{u_1,u_2\}\subseteq U$, $\{v_1,v_2\}\subseteq V$, $U_{=3}^{e_1}=\{u_i|d_G(u_i)=3, i=1,2\}$ and $V_{=3}^{e_2}=\{v_i|d_G(v_i)=3, i=1,2\}$.
\\[7pt]\noindent
\textbf{Necessity. } Recall that $R$ is a removable doubleton. 
Let $G_0$ be the underlying simple graph of $G$. Suppressing parallel edges preserves the brick property and the removable doubleton $R$, so $G_0$ is a near-bipartite brick. Every $b$-invariant edge $e$ of $G_0$ has a $b$-invariant copy $e'$ in $G$. 
Since $e'$ is forcing in $G$, $e$ is forcing in $G_0$: two distinct perfect matchings of $G_0$ containing $e$ would lift to two distinct perfect matchings of $G$ containing $e'$.

We now assume that $G$ is simple. If $G\cong K_4$ or $\overline{C_6}$, then $G\in\mathcal F$. Otherwise, if $G$ is cubic, \cref{th1,note:cubic-near-bipartite} imply that $G\cong G_1$ or $G_3$. For the remainder of the simple-graph argument, assume that $G$ is not cubic.

We first consider the degrees of $u_i$ and $v_i$ for $i=1,2$.

\begin{Cla}
   \label{RMD}
   For $i=1,2$, $d_G(u_i)\leq 4$ and $d_G(v_i)\leq 4$.
\end{Cla}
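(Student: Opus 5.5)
By symmetry (interchanging the roles of $U$ and $V$, and relabelling $u_1,u_2$) it suffices to show $d_G(u_1)\le 4$. Suppose instead that $d_G(u_1)\ge 5$. Then $u_1$ has at least four neighbours in $V$ through edges of $H$, and $u_1$ is incident with at least five edges of $G$. By \cref{lem:invariant}(ii), at most three of these edges are non-$b$-invariant. Hence at least two edges $u_1w$ and $u_1w'$ with $w,w'\in V$ are $b$-invariant. By hypothesis both are forcing edges of $G$, and both lie in $H$ because $e_1$ is the only edge of $R$ at $u_1$.

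Next apply \cref{N3} to each of these forcing edges. Since $u_1\in\{u_1,u_2\}$, the corollary gives that $wu_1u_2w$ and $w'u_1u_2w'$ are triangles of $G$. It also gives $w,w'\in N_G(U_{=3}^{e_1}\setminus\{u_1\})$, which forces $d_G(u_2)=3$. Now $u_2$ has exactly three neighbours: $u_1$ via $e_1$, and $w$ and $w'$ via $H$. So $N_G(u_2)=\{u_1,w,w'\}$.

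To reach a contradiction, consider the forcing edge $u_1w$ and its unique perfect matching $M$. Since $u_1$ is used by $u_1w$, the vertex $u_2$ must be matched by $M$ to $w'$; thus $u_2w'\in M$. In the same way, the unique perfect matching $M'$ containing $u_1w'$ contains $u_2w$. By \cref{FMNR}, $u_2w'$ and $u_2w$ are both nonremovable. Then, using $d_G(u_1)\ge 5$, pick a third edge $u_1w''$ of $H$ with $w''\notin\{w,w'\}$.

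\begin{itemize}
\item If $u_1w''$ is $b$-invariant, it too is forcing, and \cref{N3} puts $w''$ in $N_G(u_2)$. Then $u_2$ would have three neighbours in $V$, contradicting $d_G(u_2)=3$.
\item Otherwise all the remaining edges at $u_1$ other than $u_1w$ and $u_1w'$ are non-$b$-invariant. There are at least three of them, namely $e_1$ and at least two edges of $H$.
\end{itemize}

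In the second case, choose the two $b$-invariant edges at the outset to be $u_1w,u_1w'$. The same argument shows that every $b$-invariant edge at $u_1$ has its other end in $N_G(u_2)\setminus\{u_1\}$, which has size $2$. So exactly two edges at $u_1$ are $b$-invariant and at least $d_G(u_1)-2\ge 3$ are not. Together with \cref{lem:invariant}(ii) this gives $d_G(u_1)=5$, with exactly three non-$b$-invariant edges at $u_1$: $e_1$ and two edges $u_1w'',u_1w'''$ of $H$.

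The main obstacle is eliminating this extremal configuration. I would first try to show that $u_1w''$ is in fact removable with $b(G-u_1w'')=b(G)$, exploiting the triangles $u_1u_2w$ and $u_1u_2w'$ and the fact that $u_2$ has degree three. If that fails, I would apply \cref{lem:invariant}(ii) more carefully to the two non-$b$-invariant $H$-edges. Alternatively, I would use the bound on $|V(G)|$ from \cref{VGUV} together with \cref{ea-v} to count perfect matchings, showing that $u_1w$ lies in two distinct perfect matchings. Such a second matching could be built from an $M$-alternating cycle through $u_1w''$ avoiding $u_1w$, which exists because $G$ is bicritical. This contradicts $u_1w$ being forcing and completes the argument.
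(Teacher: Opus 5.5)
\textbf{There is a genuine gap.} Your reduction is sound and matches the first half of the paper's proof. Two $b$-invariant (hence forcing) edges $u_1w,u_1w'$ sit at $u_1$. \cref{N3} gives the two triangles through $e_1$, $d_G(u_2)=3$ and $N_G(u_2)=\{u_1,w,w'\}$. From this, $d_G(u_1)=5$ with exactly two $b$-invariant edges at $u_1$. (The detour through \cref{FMNR} showing $u_2w,u_2w'$ nonremovable is never used.)

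However, you stop exactly where the real work begins. You do not eliminate this configuration, and none of the listed strategies does. The last one cannot work. The $M$-edge at $u_1$ is $u_1w$, so every $M$-alternating cycle through $u_1$ contains $u_1w$, and switching along it removes $u_1w$ rather than producing a second matching containing it. A second perfect matching containing $u_1w$ needs an $M$-alternating cycle in $G-u_1-w$, which is precisely what the forcing hypothesis forbids; bicriticality does not supply one. More generally, searching for the contradiction among the edges at $u_1$ is misdirected.

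\textbf{The missing idea} is to use the set equality $V=V(e_2)\cup N_H(U_{=3}^{e_1})$ of \cref{VGUV}, not merely the bound $|V(G)|\le 12$.
\begin{enumerate}[(1)]
\item Since $U_{=3}^{e_1}=\{u_2\}$ and $N_H(u_2)=\{w,w'\}$, you get $V=\{v_1,v_2,w,w'\}$. Then $d_H(u_1)\ge 4$ forces $|U|=|V|=4$, $N_H(u_1)=V$, and these four vertices are distinct.
\item If $d_G(v_j)\ge 4$ for some $j$, then \cref{lem:invariant}(ii) gives a forcing edge at $v_j$. \cref{cl-e12} then puts $v_j\in N_H(u_2)=\{w,w'\}$, which is impossible. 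Hence $d_G(v_1)=d_G(v_2)=3$.
\item Comparing degree sums gives $14\le\sum_{x\in U}d_G(x)=\sum_{y\in V}d_G(y)\le 14$, using $d_G(w),d_G(w')\le |U|=4$. This pins every degree.
\item The unique perfect matching containing $u_1w$ must contain $u_2w'$ and match the other two $U$-vertices into $\{v_1,v_2\}$. This determines $G$ as the graph $R_0$.
\item In $R_0$, the two vertices of $U\setminus V(R)$ have degree three and are incident with no forcing edge, hence with no $b$-invariant edge. Yet they lie in no triangle, contradicting \cref{lem:invariant}(i).
\end{enumerate}
So the contradiction comes from the degree-three vertices of $U\setminus V(R)$, not from $u_1$. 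Your proposal contains neither the reduction to $|V|=4$ nor this final step.
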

\begin{proof}
      Suppose that at least one vertex in $V(R)$ has degree five or more, say $u_1$.
      By \cref{lem:invariant} (ii), $u_1$ is incident with at least two $b$-invariant edges of $G$, say $u_1v_3$ and $u_1v_4$.
      Then $u_1v_3$ and $u_1v_4$ are removable edges of $G$, and thus $\{v_3,v_4\}\subseteq V$.
      By the hypothesis, all $b$-invariant edges of $G$ are forcing edges.
      So $u_1v_3$ and $u_1v_4$ are two forcing edges of $G$ incident with $u_1$.
      \cref{N3} implies that these two forcing edges belong to two different
      triangles of $G$ containing $e_1=u_1u_2$, i.e. $v_iu_1u_2v_i$ for $i=3,4$,
      and $N_H(U_{=3}^{e_1})=N_H(u_2)=\{v_3, v_4\}$.
      By \cref{VGUV}, $V=\{v_1,v_2,v_3,v_4\}$ (at this stage, it is possible that $\{v_3,v_4\}\cap \{v_1,v_2\}\neq \emptyset$).
      Since $d_H(u_1)\geq 4$, $|U|=|V|=4$ and $N_H(u_1)=V$.

      If $d_G(v_1)\geq 4$ or $d_G(v_2)\geq 4$, then either $v_1$ or $v_2$ is incident with a
      $b$-invariant edge of $G$ by \cref{lem:invariant} (ii), say $v_1$; the incident $b$-invariant edge is also a forcing edge of $G$.
      By \cref{cl-e12}, $v_1\in N_H(U_{=3}^{e_1})$, contradicting that $N_H(U_{=3}^{e_1})=N_H(u_2)=\{v_3,v_4\}$.
      Thus, $d_G(v_1)=d_G(v_2)=3$.
      \begin{figure}[H]
         \centering
         \includegraphics[scale=0.3]{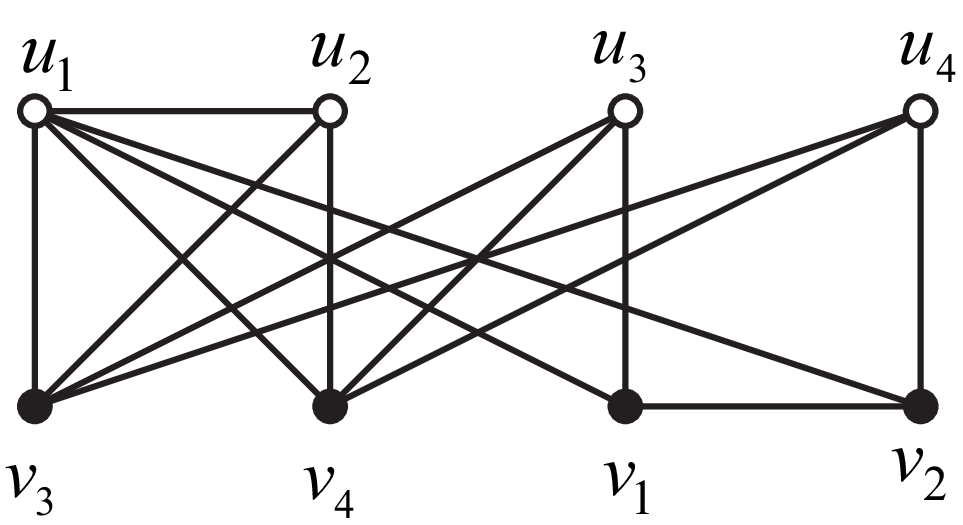}
         \caption{\label{f2-0}Illustration of \cref{RMD}: The graph $R_0$.}
      \end{figure}
      Since $$5+3\times 3 \leq \sum \limits_{x\in U} d_G(x)=\sum \limits_{y\in V} d_G(y)\leq 2\times 4+2\times 3=14,$$
      equality holds throughout, i.e. $d_G(u_i)=3$ and $d_G(v_i)=4$ for $i=3,4$.
      Let $U=\{u_1,u_2,u_3,u_4\}$. Since $N_G(v_i)=U$ ($i=3,4$),
      $\{v_3,v_4\}\subseteq N_G(u_i)$ for $i=3,4$.
      Recall that $u_1v_3$ is a forcing edge of $G$.
      Let $M_{u_1v_3}$ be the unique perfect matching of $G$ containing $u_1v_3$.
      Since $N_G(u_2)=\{u_1, v_3, v_4\}$, $u_2v_4\in M_{u_1v_3}$; hence $u_1v_4u_2v_3u_1$ is an $M_{u_1v_3}$-alternating 4-cycle.
      This implies that $G-\{u_1,v_4,u_2,v_3\}$ has a matching of size two, say $u_3v_1$ and $u_4v_2$.
      Since $d_G(u_i)=3$, $N_G(u_i)=\{v_3,v_4,v_{i-2}\}$, where $i=3,4$.
      It follows that $G\cong R_0$; see \cref{f2-0}.
      It can be checked that $u_i$ ($i=3,4$) is not incident with any forcing edge of $G$,
      and thus it is not incident with any $b$-invariant edge of $G$.
      By \cref{lem:invariant} (i), $u_i$ ($i=3,4$) belongs to a triangle of $G$, a contradiction.
   \end{proof}
It remains to prove the following claim to complete the classification of simple graphs.
\begin{Cla}
   \label{noncubic-classification}
   If $G$ is not cubic, then $G$ is isomorphic to one of $G_8$, $G_9$, and $G_{10}$.
\end{Cla}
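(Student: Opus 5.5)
The plan is to exploit the strong restrictions accumulated so far. By \cref{RMD} every vertex of $V(R)$ has degree $3$ or $4$. By \cref{VGUV} we have $U=V(e_1)\cup N_H(V_{=3}^{e_2})$, $V=V(e_2)\cup N_H(U_{=3}^{e_1})$ and $|V(G)|\le 12$. Since $G$ is not cubic and $G\not\cong K_4,\overline{C_6}$, \cref{lem:invariant} supplies $b$-invariant edges, and each of them is forcing. Hence \cref{cl-e12} applies: $U_{=3}^{e_1}\neq\emptyset$ and $V_{=3}^{e_2}\neq\emptyset$. So at least one vertex of each doubleton edge has degree exactly $3$, and each such degree-$3$ vertex has exactly two $H$-neighbours. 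This already bounds $|U|\le 2+|N_H(V_{=3}^{e_2})|\le 6$. I would split into cases according to $(|U_{=3}^{e_1}|,|V_{=3}^{e_2}|)\in\{1,2\}^2$, up to the symmetry interchanging $U$ and $V$.

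In each case, first pin down $|U|=|V|\in\{3,4,5,6\}$, i.e.\ $|V(G)|\in\{6,8,10,12\}$. Then use two facts to locate a vertex $x\notin V(R)$ of degree at least $4$, which exists because $G$ is not cubic and we may degree-count as in \cref{RMD}:
\begin{enumerate}[(a)]
\item by \cref{lem:invariant}(i), such an $x$ is incident with at least two $b$-invariant, hence forcing, edges;
\item by \cref{ea-v}, every endpoint of a forcing edge outside $V(R)$ is adjacent to a degree-$3$ vertex of the opposite doubleton edge.
\end{enumerate}
For each forcing edge $f$, \cref{FMNR} says the other edges of its unique perfect matching $M_f$ are nonremovable. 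Moreover, $G-V(C)$ has no $M_f$-alternating structure, so no $M_f$-alternating $4$-cycle may avoid $f$. Exactly this trick killed $R_0$ in \cref{RMD}. Applying these constraints, the neighbourhoods $N_H(u_i)$ and $N_H(v_i)$ of the degree-$3$ doubleton vertices, together with the edges at high-degree vertices, should be forced up to isomorphism. The surviving configurations should be $G_8$ (6 vertices), $G_9$ (8 vertices) and $G_{10}$ (10 vertices). Candidates on $12$ vertices, and other candidates, should be eliminated in one of two ways. Either we exhibit a vertex incident with no forcing edge that is not in a triangle of degree-$3$ vertices, contradicting \cref{lem:invariant}(i). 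Or we find a removable edge whose removal keeps $b$ but which lies in two perfect matchings.

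I expect the main obstacle to be the case $|U_{=3}^{e_1}|=|V_{=3}^{e_2}|=2$ with $|V(G)|\in\{10,12\}$. There $N_H(U_{=3}^{e_1})$ can have up to four vertices, and many adjacency patterns remain. To prune them, I would first use bicriticality and 3-connectivity: every vertex outside $V(R)$ must have at least three neighbours within the small sets described by \cref{VGUV}. This should show $|V(G)|=12$ is impossible by a degree-sum count like the one in \cref{RMD}. I would then treat the $10$-vertex subcases by checking which edges are forcing via \cref{B.U.PM} applied to $H-u-v$. Verifying that an edge is $b$-invariant (rather than merely removable) in the leftover candidates is delicate. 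Where possible I would avoid it by using (i) of \cref{lem:invariant} contrapositively, i.e.\ by only requiring that some vertex have too many non-forcing edges.
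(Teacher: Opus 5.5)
\textbf{Verdict: genuine gap.} Your case split is the paper's: combined with \cref{RMD}, the types $(|U_{=3}^{e_1}|,|V_{=3}^{e_2}|)=(1,1),(2,1),(2,2)$ are exactly its three cases. Beyond that, the proposal only asserts that the constraints ``should'' force $G_8,G_9,G_{10}$. The one concrete step you commit to in the hardest subcase fails: a degree-sum count cannot exclude $|V(G)|=12$ when $|U_{=3}^{e_1}|=|V_{=3}^{e_2}|=2$.

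Here \cref{VGUV} says only that $N_H(v_1),N_H(v_2)$ are disjoint pairs covering $\{u_3,\dots,u_6\}$, and likewise $N_H(u_1),N_H(u_2)$ cover $\{v_3,\dots,v_6\}$. So every vertex outside $V(R)$ has exactly one neighbour in $V(R)$. As far as degrees go, the rest of $H$ can be any bipartite graph of minimum degree at least $2$ between $\{u_3,\dots,u_6\}$ and $\{v_3,\dots,v_6\}$, including non-regular ones. Since $H$ is bipartite, $\sum_{x\in U}d_G(x)=\sum_{y\in V}d_G(y)$ holds automatically, so counting yields nothing. The count in \cref{RMD} worked only because $|U|=4$ and $N_H(u_1)=V$ had already been forced.

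The paper needs a structural argument instead. It takes forcing edges $f_3,f_4,f_5,f_6$ at $v_3,v_4,u_5,u_6$ (by \cref{lem:invariant}(i)). Using $M_f$-alternating $4$-cycles and \cref{cl-e12}, it shows that a $4$-cycle meeting $V(R)$ in exactly one vertex would make the $f_i$ pairwise disjoint. Then $\{f_3,f_4,f_5,f_6,e_1,e_2\}$ is a perfect matching, which is impossible. With no such $4$-cycle, each vertex outside $V(R)$ has at most one neighbour in each of the pairs $\{v_1,v_2\},\{v_3,v_4\},\{v_5,v_6\}$ (resp.\ the corresponding $u$-pairs), so $G$ would be cubic.

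The idea that closes that argument is missing from your toolkit. An edge of $H$ lying in a perfect matching of $G$ that contains both $e_1$ and $e_2$ cannot be forcing, because $H$ is matching covered, so the edge also lies in a perfect matching of $H$. The paper uses this again on ten vertices to get $d_G(u_4)=d_G(v_4)=3$. If $d_G(u_4)\ge 4$, then $u_4v_1,u_4v_2$ are not forcing by \cref{cl-e12}. So by \cref{lem:invariant}(i) all other edges at $u_4$ are $b$-invariant, hence forcing. But any perfect matching containing $R$ must use one of them. That step presupposes $E_G(V(e_1),V(e_2))=\emptyset$, which the paper obtains from an alternating-$4$-cycle argument together with \cref{FMNR}. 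Checking forcing edge by edge via \cref{B.U.PM} does not replace these steps.

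Finally, your assertion that a vertex $x\notin V(R)$ of degree at least $4$ exists in every case is false already for $G_8$, whose only degree-$4$ vertices are $u_2$ and $v_2$. In the $(1,1)$ case you must instead apply \cref{lem:invariant}(ii) at $u_2$ and $v_2$, and then \cref{cl-e12}, to obtain $u_2\in N_H(v_1)$ and $v_2\in N_H(u_1)$. This is what cuts $|V|=|V(e_2)\cup N_H(u_1)|$ down to $3$; \cref{VGUV} alone only gives $|V|\le 4$.
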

\begin{proof}
   By \cref{RMD} and the fact that $\delta(G)\geq 3$, each of $u_1,u_2,v_1,v_2$ has degree three or four. Since $G$ is not cubic, it has a vertex of degree at least four. By \cref{lem:invariant}, this vertex is incident with a $b$-invariant edge $e$. Since neither edge of $R$ is removable, $e\in E(H)$. By hypothesis, $e$ is forcing, so \cref{cl-e12} implies that both $U_{=3}^{e_1}$ and $V_{=3}^{e_2}$ are nonempty. By interchanging $u_1$ with $u_2$, interchanging $v_1$ with $v_2$, and, if necessary, interchanging the roles of $U$ and $V$, it therefore suffices to consider the following three cases: (i) $d_G(u_1)=d_G(v_1)=3$ and $d_G(u_2)=d_G(v_2)=4$; (ii) $d_G(u_1)=d_G(v_1)=d_G(v_2)=3$ and $d_G(u_2)=4$; and (iii) $d_G(u_i)=d_G(v_i)=3$ for $i=1,2$.
\begin{Cas}
   \label{T34}
   $d_G(u_1)=d_G(v_1)=3$ and $d_G(u_2)=d_G(v_2)=4$.
\end{Cas}
   \begin{figure}[H]
      \centering
      \includegraphics[scale=0.3]{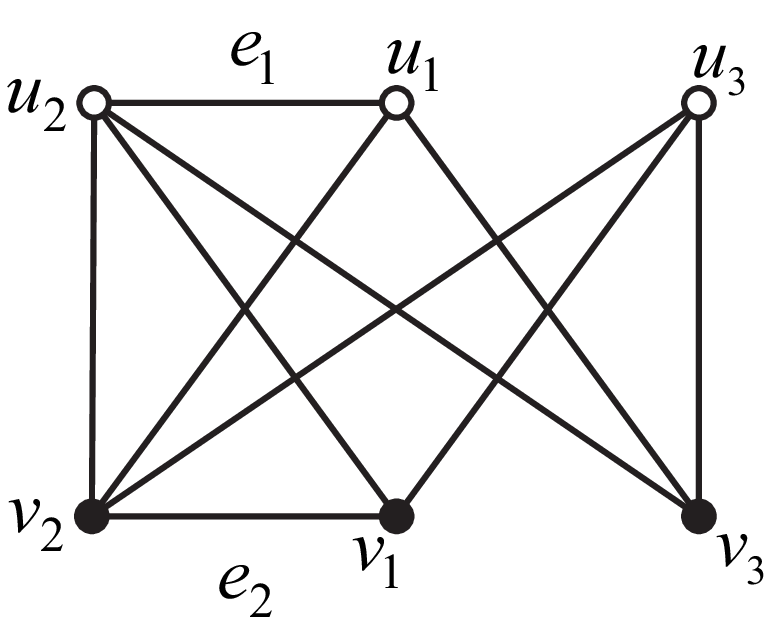}
      \caption{\label{IF1} Illustration of \cref{T34}: the graph $G_8$.}
   \end{figure}
   By \cref{lem:invariant} (ii), each of $u_2$ and $v_2$ is incident with a $b$-invariant edge of $G$, which is a forcing edge of $G$.
   By \cref{cl-e12}, $u_2\in N_H(V_{=3}^{e_2})=N_H(v_1)$ and $v_2\in N_H(U_{=3}^{e_1})=N_H(u_1)$.
   Since $d_H(u_2)=3$ and $N_H(u_2)\subseteq V$, $|V|\geq 3$.
   Since $d_H(u_1)=2$ and $v_2\in N_H(u_1)$, $|V(e_2)\cup N_H(u_1)|\leq 3$.
   By \cref{VGUV}, $|V|=|V(e_2)\cup N_H(U_{=3}^{e_1})|=|V(e_2)\cup N_H(u_1)|\leq 3$.
   So $|V|=3$. Since $|U|=|V|$, $|U|=3$.
   Let $U=\{u_1,u_2,u_3\}$ and $V=\{v_1,v_2,v_3\}$.
   Since $d_G(u_3)\geq 3$ and $d_G(v_3)\geq 3$, $N_G(u_3)=V$ and $N_G(v_3)=U$.
   It follows that $G\cong G_8$ (see \cref{IF1}).
\begin{Cas}
   \label{R3334}
   $d_G(u_1)=d_G(v_1)=d_G(v_2)=3$ and $d_G(u_2)=4$.
\end{Cas}
   Since $|U|=|V|$, the degree sums over $U$ and $V$ are equal, and $d_G(u_2)=4$ while all other degrees are at least three, $V$ contains a vertex of degree at least four.
   Since $d_G(v_1)=d_G(v_2)=3$, $V\setminus V(e_2)$ contains a vertex of degree at least four, and thus $|U|\geq 4$.
   Since $d_G(u_1)=3$, $|N_H(u_1)|\leq 2$.
   By \cref{VGUV}, $|V|=|N_H(U_{=3}^{e_1})\cup V(e_2)|=|N_H(u_1)\cup V(e_2)|\leq 4$.
   Since $|U|=|V|$, $|U|=|V|=4$.
   Let $U=\{u_1,u_2,u_4,u_3\}$ and $V=\{v_1,v_2,v_3,v_4\}$.
   Since $V=N_H(u_1)\cup \{v_1,v_2\}$,
   $$N_G(u_1)=\{u_2,v_3,v_4\}.$$

   By \cref{lem:invariant} (ii), $u_2$ is incident with at least one $b$-invariant edge of $G$, say $e$.
   Then $e$ is a forcing edge of $G$.
   By \cref{cl-e12}, $u_2\in N_H(V_{=3}^{e_2})=N_H(\{v_1,v_2\})$ and $V(e)\setminus \{u_2\}\subseteq N_H(U_{=3}^{e_1})=N_H(u_1)=\{v_3,v_4\}$.
   Relabel the vertices, if necessary, so that $u_2\in N_H(v_2)$ and $V(e)\setminus \{u_2\}=\{v_4\}$; that is, $u_2v_2\in E(G)$ and $e=u_2v_4$.
   Let $M_e$ be the unique perfect matching of $G$ containing $e$ (i.e. $u_2v_4$).
   Recall that $N_G(u_1)=\{u_2,v_3,v_4\}$.
   Then $u_1v_3\in M_e$. By the symmetry of $u_4$ and $u_3$, relabel the vertices so that
   $M_e=\{u_1v_3,u_2v_4,u_4v_1,u_3v_2\}$.
   Since $d_G(v_2)=3$, $N_G(v_2)=\{v_1,u_2,u_3\}$.
   Since $d_G(u_4)\geq 3$ and $v_2\notin N_G(u_4)$,
   $$N_G(u_4)=\{v_1,v_3,v_4\},$$ see \cref{IF8} (a).

   \begin{figure}[H]
      \centering
      \includegraphics[scale=0.26]{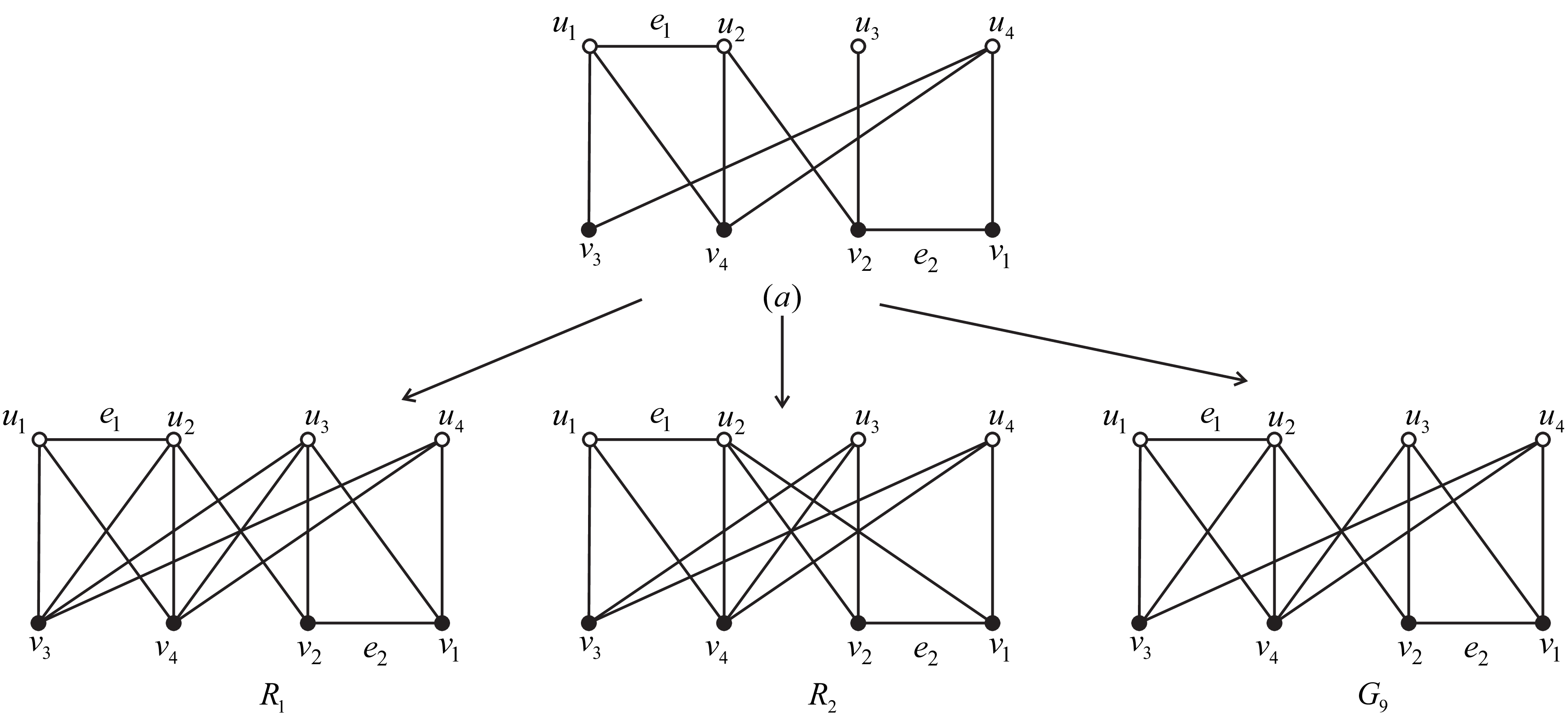}
      \caption{\label{IF8} Illustration of \cref{R3334}.}
   \end{figure}

   For the vertex $u_2$, $|N_H(u_2)|=3$.
   Since $\{v_2,v_4\}\subseteq N_H(u_2)$, either $u_2v_3\in E(G)$ or $u_2v_1\in E(G)$.
   If $u_2v_3\in E(G)$, we have
   $$N_G(u_2)=\{u_1,v_3,v_4,v_2\}.$$
   Since $d_H(v_1)=2$ and $\{u_1,u_2\}\cap N_H(v_1)=\emptyset$, $N_G(v_1)=\{v_2,u_4,u_3\}$.
   Next we consider the neighbors of $u_3$.
   If $d_G(u_3)=3$, then, by symmetry between $v_3$ and $v_4$, relabel the vertices so that $u_3v_4\in E(G)$.
   In this case, $$N_G(u_3)=\{v_4,v_1,v_2\}$$ and $G\cong G_9$.
   If $d_G(u_3)=4$, then $N_G(u_3)=V$ and thus $G\cong R_1$.
   If $u_2v_1\in E(G)$, we have $$N_G(u_2)=\{u_1,v_4,v_1,v_2\}$$ and $N_G(v_1)=\{v_2,u_4,u_2\}$.
   Since $d_G(u_3)\geq 3$ and $v_1\notin N_G(u_3)$, $$N_G(u_3)=\{v_2,v_3,v_4\}$$ and thus $G\cong R_2$.

   In the graph $R_i$ ($i=1,2$), $u_4$ is incident with at least one $b$-invariant edge of $G$ by \cref{lem:invariant} (i).
   However, $u_4$ is not incident with any forcing edge of $G$, a contradiction.
   It follows that $G\cong G_9$.
\begin{Cas}
   \label{R3333}
   $d_G(u_i)=d_G(v_i)=3$ for $i=1,2$.
\end{Cas}
   Since $G$ is not cubic, $V(G)\setminus V(R)$ contains a vertex of degree at least four.
   This implies that $|U|=|V|\geq 4$.
   By \cref{VGUV}, $|U|=|V|\leq 4+2=6$.
   \begin{Subcla}
      \label{clauv}
      $|U|=|V|\geq 5$.
   \end{Subcla}
   \begin{figure}[H]
      \centering
      \includegraphics[scale=0.3]{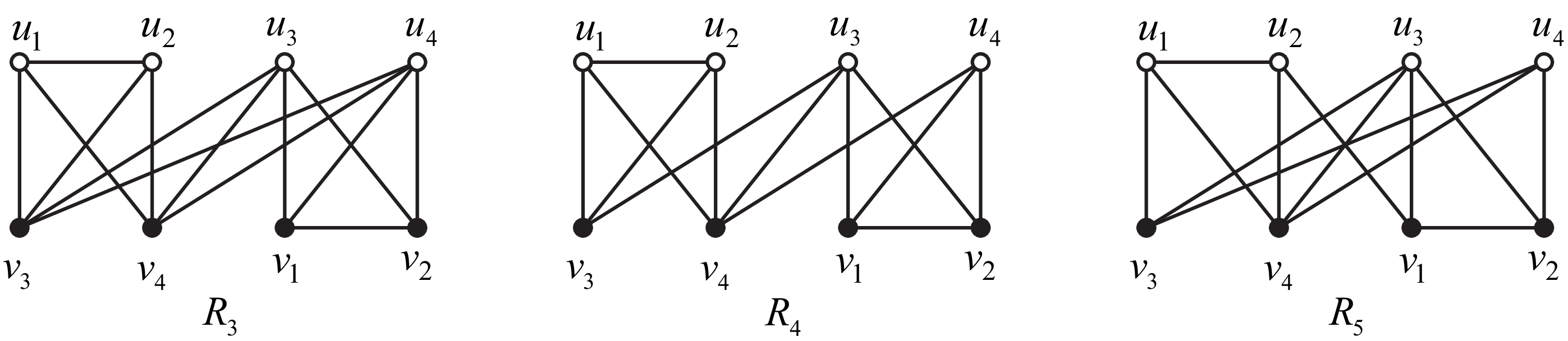}
      \caption{\label{f2-1} Illustration of \cref{clauv}.}
   \end{figure}
   \begin{proof}
   Suppose, to the contrary, that $|U|=|V|<5$. Then $|U|=|V|=4$. Let $U=\{u_1,u_2,u_3,u_4\}$ and $V=\{v_1,v_2,v_3,v_4\}$.
   Let $n_4$ be the number of 4-degree vertices in $U$.
   Since $d_G(u_1)=d_G(u_2)=3$, $n_4=1$ or $n_4=2$. If $n_4=2$, then $G\cong R_3$ (see \cref{f2-1}).
   We can check that $u_4v_4$ does not belong to any perfect matching of $H$,
   contradicting the fact that $H$ is matching covered.
   Next we consider the case that $n_4=1$.
   Then $U$ contains exactly one 4-degree vertex, say $u_3$,
   and $V$ contains exactly one 4-degree vertex, say $v_4$.
   If $E_G(V(e_1),V(e_2))\neq\emptyset$, say
   $u_2v_1\in E_G(V(e_1),V(e_2))$,
   then $G\cong R_5$; otherwise, $G\cong R_4$.
   For the graph $R_4$, $u_4v_4$ does not belong to
   any perfect matching of $H$, contradicting that $H$ is matching covered.
   For the graph $R_5$, $u_4$ is incident with at least one $b$-invariant edge of $G$ by \cref{lem:invariant} (i).
   However, $u_4$ is not incident with any forcing edge of $G$, a contradiction.
   Thus $|U|=|V|\geq 5$.
\end{proof}
\begin{Subcla}
   \label{clauv5}
   $|U|=|V|=5$.
\end{Subcla}
\begin{proof}
   Suppose, to the contrary, that $|U|=|V|\neq 5$. Then $|U|=|V|=6$ by \cref{clauv}.
   Let $U=\{u_i:1\leq i\leq 6\}$ and $V=\{v_i:1\leq i\leq 6\}$.
   By \cref{VGUV}, $U=V(e_1)\cup N_H(V_{=3}^{e_2})$ and $V=V(e_2)\cup N_H(U_{=3}^{e_1})$,
   and thus $N_H(x)\cap V(R)=\emptyset$ for each $x\in V(R)$.
   Relabel the vertices so that $N_H(u_1)=\{v_3,v_4\}$, $N_H(u_2)=\{v_5,v_6\}$, $N_H(v_1)=\{u_3,u_4\}$ and
   $N_H(v_2)=\{u_5,u_6\}$.

   By \cref{lem:invariant} (i), each of the vertices $v_3$, $v_4$, $u_5$, and $u_6$ is incident with at
   least one $b$-invariant edge of $G$, say $f_3$, $f_4$, $f_5$ and $f_6$, respectively.
   Then all of $f_3$, $f_4$, $f_5$ and $f_6$ are forcing edges of $G$.
   For $i=3,4,5,6$, let $M_{f_i}$ be the unique perfect matching of $G$ containing $f_i$.
   
   Suppose that $G$ contains a 4-cycle $C$ with $|V(C)\cap V(R)|=1$. 
   Relabel the vertices so that $C=u_2v_5u_3v_6u_2$.
   If $V(f_i)\subseteq \{v_3,v_4,u_5,u_6\}$ for some $3\leq i\leq 6$,
   we may assume by symmetry that $f_i=u_5v_3$.
   Since $M_{f_i}$ is a perfect matching of $H$, the prescribed neighborhoods force
   $u_1v_4$, $u_6v_2$, and $u_4v_1$ to belong to $M_{f_i}$.
   The remaining vertices are precisely those of $C$, so $C$ is an $M_{f_i}$-alternating
   4-cycle disjoint from $f_i$, contradicting that $f_i$ is forcing.
   Together with \cref{cl-e12},
   $$V(f_i)\setminus \{v_i\}\subseteq N_H(V_{=3}^{e_2})\setminus \{u_5,u_6\}=\{u_3,u_4\}, \mbox{for}~ i=3,4;$$ and
   $$V(f_j)\setminus \{u_j\}\subseteq N_H(U_{=3}^{e_1})\setminus \{v_3,v_4\}=\{v_5,v_6\}, \mbox{for}~  j=5,6.$$ 
   If $V(f_3)\cap V(f_4)\neq\emptyset$, we may assume that $f_3=u_3v_3$ and $f_4=u_3v_4$.
   Applying the preceding alternating-cycle argument to the 4-cycle $u_1v_3u_3v_4u_1$
   yields $V(f_j)\cap\{v_5,v_6\}=\emptyset$ for $j=5,6$, a contradiction.
   Thus $f_3$ and $f_4$ are disjoint. Similarly, $f_5$ and $f_6$ are disjoint.
   Relabel the vertices so that $f_i=u_iv_i$ for $3\leq i\leq 6$ (see \cref{f2-2}).
   Let $M_1=\{f_3,f_4,f_5,f_6,e_1,e_2\}$.
   Then $M_1$ is a perfect matching of $G$ containing $f_i$.
   Since $H$ is matching covered,
   $f_i$ belongs to a perfect matching of $H$, different from $M_1$.
   Thus $f_i$ is not a forcing edge of $G$, a contradiction.
   \begin{figure}[H]
      \centering
      \includegraphics[scale=0.3]{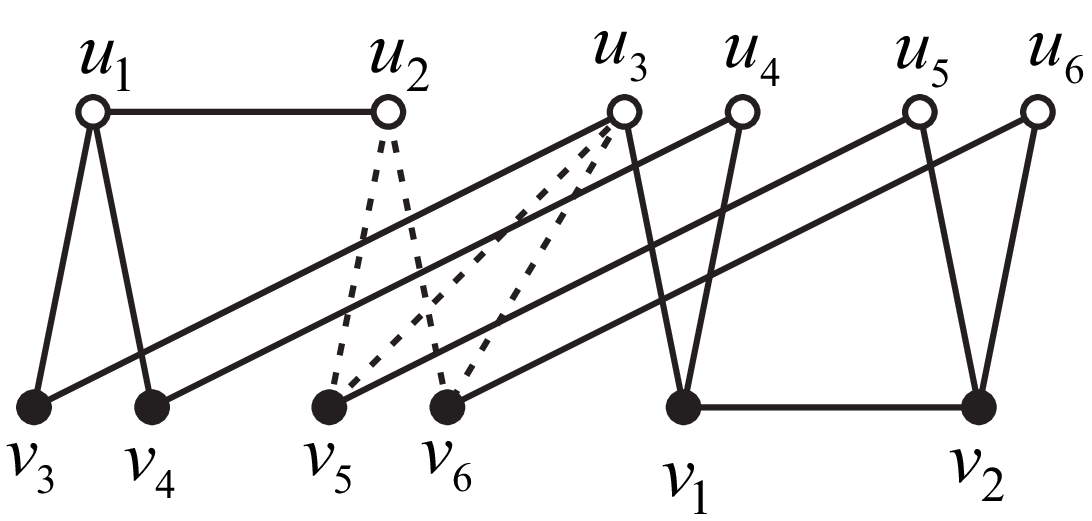}
      \caption{\label{f2-2} Illustration of \cref{clauv5}.}
   \end{figure}
  
   The preceding argument shows that $G$ contains no 4-cycle meeting $V(R)$ in exactly one vertex.
   Consequently, each vertex of $U\setminus V(R)$
   has at most one neighbor in each of $\{v_3,v_4\}$ and $\{v_5,v_6\}$.
   It also has exactly one neighbor in $\{v_1,v_2\}$, by the prescribed neighborhoods
   of $v_1$ and $v_2$. Thus its degree is at most three, and hence exactly three.
   The same argument applies to every vertex of $V\setminus V(R)$.
   Since the vertices in $V(R)$ also have degree three, $G$ is cubic, a contradiction.
   So $|U|=|V|=5$.
\end{proof}
\begin{Subcla}
   \label{TDT}
   $E_G(V(e_1),V(e_2))=\emptyset$.
   Moreover, $G$ contains two disjoint triangles.
\end{Subcla}
\begin{proof}
   By \cref{VGUV}, $U=V(e_1)\cup N_H(V_{=3}^{e_2})$.
   If $V(e_1)\cap N_H(V_{=3}^{e_2})=\emptyset$, this implies
   $E_G(V(e_1),V(e_2))=\emptyset$.
   Consequently, $V(e_2)\cap N_H(U_{=3}^{e_1})=\emptyset$.
   Since $|U|=|V|=5$ (by \cref{clauv5}), $|N_H(V_{=3}^{e_2})|=|N_H(U_{=3}^{e_1})|=3$.
   Since $U_{=3}^{e_1}=V(e_1)$ and $V_{=3}^{e_2}=V(e_2)$, $e_1$ and $e_2$ belong to two disjoint triangles, respectively.

   Next we assume that $V(e_1)\cap N_H(V_{=3}^{e_2})\neq \emptyset$.
   We show that this assumption leads to a contradiction.
   Since $|U|=|V|=5$, we have $|N_H(V_{=3}^{e_2})|=|N_H(U_{=3}^{e_1})|=4$.
   Let $U=\{u_i:1\leq i\leq 5\}$ and $V=\{v_i:1\leq i\leq 5\}$.
   Relabel the vertices so that $N_H(u_1)=\{v_3,v_4\}$, $N_H(u_2)=\{v_5,v_1\}$,
   $N_H(v_1)=\{u_2,u_3\}$ and $N_H(v_2)=\{u_4,u_5\}$, see \cref{f2-3}.

   If $u_3v_5$ is a $b$-invariant edge of $G$, then it is also a forcing edge of $G$.
   Let $M_{u_3v_5}$ denote the unique perfect matching of $G$ containing $u_3v_5$.
   Since $u_3v_5\in E(H)$ and $H$ is matching covered, $M_{u_3v_5}$ is also a perfect matching of $H$.
   Let $C=u_2v_1u_3v_5u_2$. Then $C$ is an $M_{u_3v_5}$-alternating 4-cycle.
   Since $H-V(C)$ is a bipartite graph with a unique perfect matching,
   it contains a 1-degree vertex in $U$ by \cref{B.U.PM}.
   However, $u_1$ retains both neighbors $v_3,v_4$ in $H-V(C)$, while each of $u_4,u_5$ has degree at least three in $H$ and loses at most one neighbor. Thus every vertex in $U\setminus V(C)$ has degree at least two in $H-V(C)$, a contradiction.

   \begin{figure}[H]
      \centering
      \includegraphics[scale=0.3]{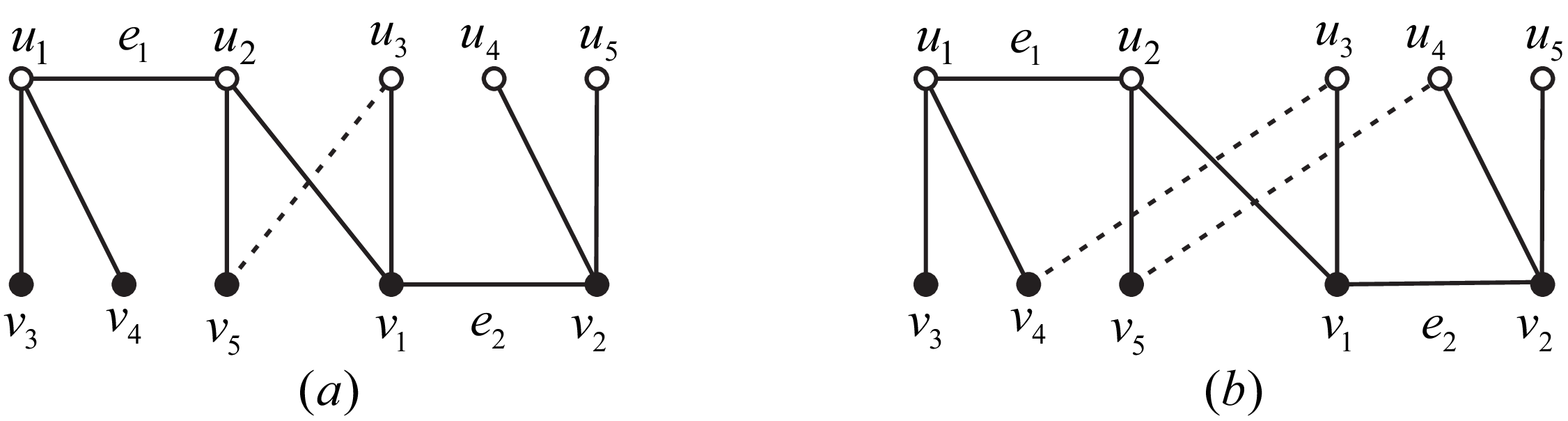}
      \caption{\label{f2-3} Illustration of \cref{TDT}.}
   \end{figure}
   By \cref{lem:invariant} (i), each of $u_3$ and $v_5$ is incident with at least one
   $b$-invariant edge of $G$, say $f_1$ and $f_2$, respectively.
   For $i=1,2$, $f_i$ is $b$-invariant and hence is a forcing edge in $E(H)$. Since $G$ contains no triangles, \cref{N3} gives $V(f_i)\cap V(R)=\emptyset$.
   From the above discussion, $f_1\neq f_2$ and we can assume that $f_1=u_3v_4$ and $f_2=u_4v_5$.
   Since $f_1$ is a forcing edge, $\{f_1,u_1v_3,u_2v_1,f_2,u_5v_2\}$ is the unique perfect matching of $G$ containing $f_1$; see \cref{f2-3} (b).
   By \cref{FMNR}, $f_2$ is a nonremovable edge of $G$, a contradiction.
   This completes the proof.
\end{proof}
By \cref{TDT}, relabel the vertices so that $N_H(u_1)=\{v_4,v_5\}$, $N_H(u_2)=\{v_4,v_3\}$,
$N_H(v_1)=\{u_4,u_5\}$ and $N_H(v_2)=\{u_4,u_3\}$. As shown in \cref{f2-4}, $u_4$ and $v_4$ belong to
two disjoint triangles of $G$, respectively.
\begin{Subcla}
   \label{TT33}
   $d_G(u_4)=d_G(v_4)=3$.
\end{Subcla}
\begin{proof}
   Suppose, to the contrary, that $d_G(u_4)\geq 4$ or $d_G(v_4)\geq 4$, say the former.
   By \cref{lem:invariant} (i), $u_4$ is incident with at most two non-$b$-invariant edges
   of $G$.
   By \cref{cl-e12}, each forcing edge of $G$ that lies in $E(H)$ has one end in $N_H(V_{=3}^{e_2})$ and one end in $N_H(U_{=3}^{e_1})$.
   Since $N_H(U_{=3}^{e_1})\cap \{v_1,v_2\}=\emptyset$ (by \cref{TDT}), $u_4v_1$ and $u_4v_2$
   are not forcing edges of $G$ and hence are not $b$-invariant.
   Let $f_1,f_2,\ldots,f_k$ be all edges of $G$ incident with $u_4$ other than
   $u_4v_1$ and $u_4v_2$.
   Then $k\geq 2$ and each $f_i$ ($1\leq i\leq k$) is a $b$-invariant edge of $G$, and thus a forcing edge of $G$.
   Let $M_R$ be a perfect matching of $G$ containing $R$.
   Then $M_R$ contains at least one edge from $\{f_1,f_2,\ldots,f_k\}$, say $f_{i_0}$.
   Since $f_{i_0}$ belongs to some perfect matching of $H$, which is different from
   $M_R$, $f_{i_0}$ is not a forcing edge of $G$, a contradiction.
\end{proof}
\begin{figure}[H]
   \centering
   \includegraphics[scale=0.3]{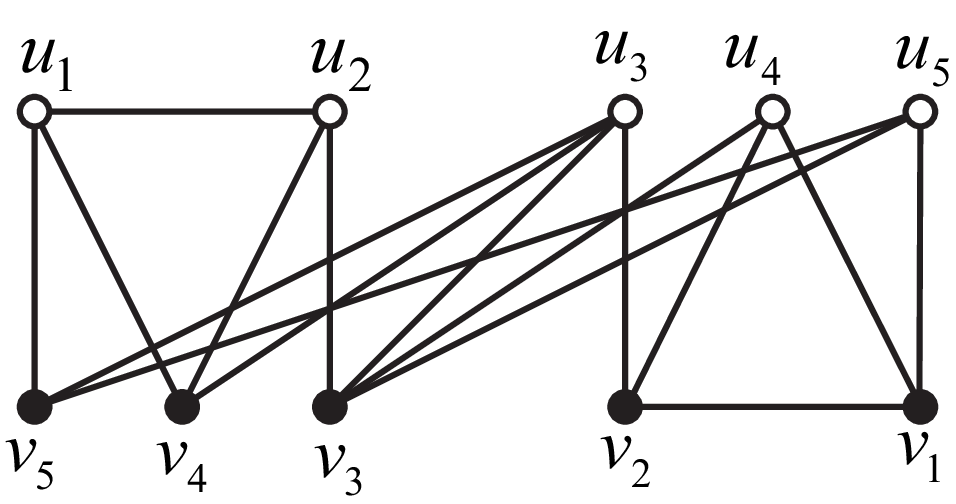}
   \caption{\label{f2-4} Illustration of \cref{R3333}: The graph $G_{10}$.}
\end{figure}
From the above discussion, all vertices of $G$, except for vertices in $\{u_3,u_5,v_3,v_5\}$, have degree three.
Since $G$ is not cubic, relabel the vertices so that $d_G(u_3)\geq 4$ and $d_G(v_3)\geq 4$.
The degree conditions give $N_G(v_4)=\{u_1,u_2,u_3\}$ and $N_G(u_4)=\{v_1,v_2,v_3\}$. Together with $\delta(G)\geq3$, these imply $N_G(u_5)=\{v_1,v_3,v_5\}$ and $N_G(v_5)=\{u_1,u_3,u_5\}$, so $d_G(u_5)=d_G(v_5)=3$. 
Thus $G\cong G_{10}$, completing \cref{R3333}. The three cases prove
\cref{noncubic-classification} and hence establish the classification
when $G$ is simple. 
\end{proof}

It remains to determine the possible parallel extensions. Let $G_0$ be the underlying simple graph of $G$, and let $D\subseteq E(G_0)$ consist of the edges having multiplicity greater than one in $G$. Each parallel copy is $b$-invariant and hence forcing in $G$. Thus every edge of $D$ is forcing in $G_0$, and no perfect matching of $G_0$ contains two edges of $D$. Moreover, some removable doubleton of $G_0$ is disjoint from $D$.

Inspection of the perfect matchings and removable doubletons of the seven underlying simple graphs shows that these conditions imply that, up to an automorphism, $D$ is contained in the dashed edge classes of one of the families in \cref{nf1}. Hence $G\in\mathcal F$, completing the proof of \cref{th2}.

\section*{Acknowledgements}
   This work was supported by the National Natural Science Foundation of China (Nos. 12271235 and 12601685), 
   the Startup Research Fund for Introduced Talents of Sichuan Normal University (No. kyqd20260308), 
   the Natural Science Foundation of Fujian Province (No. 2026J002034) 
   and Sichuan Science and Technology Program (No. 2026NSFSC2032).
\section*{Conflict of interest}
   The authors declare that they have no conflict of interest.
   
\section*{Data availability}
   No datasets were generated or analysed during the current study. A preprint of this article is available on arXiv at \url{https://doi.org/10.48550/arXiv.2607.00608}.

\end{document}